\theoremstyle{plain}
\newtheorem{theorem}{Theorem}[section]
\newtheorem{combinatorial proof}{Theorem}[combinatorial proof]
\newtheorem{lemma}[theorem]{Lemma}
\newtheorem{example}[theorem]{Example}
\newtheorem{proposition}[theorem]{Proposition}
\newtheorem{rem}{Remark}
\theoremstyle{definition}
\newtheorem{defn}{Definition}[section]
\title{}
\begin{document}
\title[enumeration of weighted paths on a DIGRAPH AND block HOOK determinant ]{enumeration of weighted paths on a DIGRAPH AND block HOOK determinant}
\author[Sudip Bera]{Sudip Bera}
\address[Sudip Bera]{Department of Mathematics, Indian Institute of Science, Bangalore 560 012}
\email{sudipbera@iisc.ac.in}
\keywords{block hook matrix; determinants; weighted path; combinatorial proof  }
\subjclass[2010]{05A10; 05A05; 11C20; 05C30; 05C38}
\maketitle
\begin{abstract}
In this article, we evaluate determinants of ``block hook'' matrices, which are block matrices consist of hook matrices. In particular, we deduce that the determinant of a block hook matrix factorizes nicely. In addition we give a combinatorial interpretation of the aforesaid factorization property by counting weighted paths in a suitable weighted digraph.
\end{abstract}  
\section{introduction}
The evaluation of determinants is a nice topic, and fascinating for many people \cite{27,26,25,39,36}. A huge amount of such evaluation has been collected in \cite{26,25}. Specially, the problem of calculating the determinant of a $2 \times 2$ block matrix has been long studied \cite{39,36}. Block matrices are applied all over in  mathematics and physics. They appear naturally in the description of systems with multiple discrete variables  \cite{29, 30}. Moreover, block matrices are utilized in many computational methods familiar to researchers of fluid dynamics \cite{34}. Also the determinants of these matrices are found over a large number of area for both analytical and numerical applications \cite{32,33}. The purpose of this paper is to evaluate the determinantal formulas of some special classes of block matrices, known as \emph{block hook matrix} (defined later). In particular, we will show that the determinants of these block hook matrices admit nice product formulas. Now, let us define hook matrix in a precise way. First we need to define the following.
\begin{defn}
A square matrix is called an \emph{hook} matrix if the pattern of the entries satisfy one of the following four conditions;
\begin{enumerate}
	\item[(a)] 
all the entries right and below of the entry at the  $(i,i)^{\text{th}}\;(i=1,\cdots, m)$ position are same
\item[(b)] 
all the entries left and above of the entry at the  $(m+1-i,m+1-i)^{\text{th}}\;(i=1,\cdots, m)$ position are same
\item[(c)] 
all the entries right and above of the entry at the  $(m+1-i,i)^{\text{th}}\;(i=1,\cdots m)$ position are same
\item[(d)] 
 all the entries left and below of the entry at the  $(i,m+1-i)^{\text{th}}\;(i=1,\cdots m)$ position are same.	
\end{enumerate}   	
\end{defn}
Now consider the matrices in \eqref{hookmat:2a}, \eqref{hookmat:2b},  \eqref{hookmat:8} and \eqref{hookmat:7}.
\begin{equation}\label{hookmat:2a}
A_m{(x_1, \cdots, x_m)}=\left(
\begin{array}{ccccc}
x_m & x_{m} & \cdots&x_m & x_m\\
x_m&x_{m-1} & \cdots&x_{m-1} & x_{m-1} \\
\vdots&\vdots&\ddots&\vdots&\vdots\\
x_{m} & x_{m-1}&\cdots & x_{2} & x_{2}  \\
x_m & x_{m-1}&\cdots &  x_{2} & x_1 
\end{array}
\right)
\end{equation}
\begin{equation}\label{hookmat:2b}
B_m{(x_1, \cdots, x_m)}=\left(
\begin{array}{ccccc}
x_1 & x_{2} & \cdots&x_{m-1} & x_m\\
x_2&x_{2} & \cdots&x_{m-1} & x_{m} \\
\vdots&\vdots&\ddots&\vdots&\vdots\\
x_{m-1} & x_{m-1}&\cdots & x_{m-1} & x_{m}  \\
x_m & x_{m}&\cdots &  x_{m} & x_m 
\end{array}
\right)
\end{equation}
\begin{align}\label{hookmat:8}
C_m(x_1, \cdots,  x_m)= \left(
\begin{array}{ccccc}
x_m & x_{m-1} &  \cdots &x_{2} & x_1\\
x_m&x_{m-1} &  \cdots & x_{2} & x_{2} \\
\vdots & \vdots & \ddots&\vdots &  \vdots  \\
x_{m} & x_{m-1} & \cdots &x_{m-1} & x_{m-1}  \\
x_m & x_{m} & \cdots & x_{m} & x_m \\ 
\end{array}
\right).
\end{align}
\begin{align}\label{hookmat:7}
D_m(x_1,  \cdots,  x_m)= \left(
\begin{array}{ccccc}
x_m & x_{m} &  \cdots &x_{m} & x_m\\
x_{m-1}&x_{m-1} &  \cdots & x_{m-1} & x_{m} \\
\vdots & \vdots & \ddots&\vdots &  \vdots  \\
x_{2} & x_{2} & \cdots &x_{m-1} & x_{m}  \\
x_1 & x_{2} & \cdots & x_{m-1} & x_m \\ 
\end{array}
\right).
\end{align}
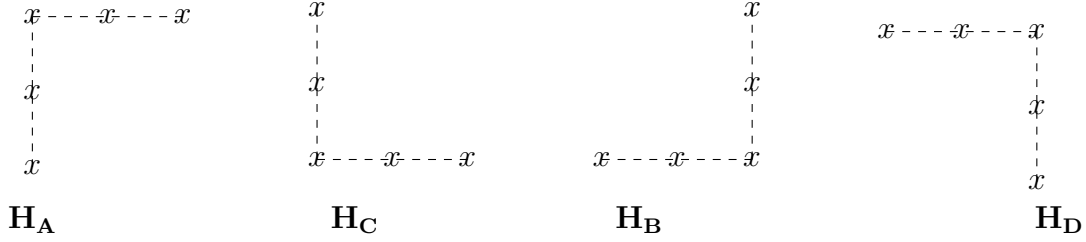
\begin{figure}   
	\begin{tikzpicture}
	\draw[dashed](0,0) -- (2,0);
	\draw [dashed](0,0) -- (0,-2);
	\node (B1) at (0,-2.7 )   {$\bf{H_A}$};
	\node (B1) at (0,0 )   {${x}$};
	\node (B1) at (1,0 )   {${x}$};
	\node (B1) at (2,0 )   {${x}$};
	\node (B1) at (0,-1 )   {${x}$};
	\node (B1) at (0,-2 )   {${x}$};
	\end{tikzpicture}
	\hspace{1cm}
	\begin{tikzpicture}
	\draw [dashed] (0,0) -- (2,0);
	\draw [dashed](0,0) -- (0,2);
	\node (B1) at (.5,-.8 )   {$\bf{H_C}$};
	\node (B1) at (0,0 )   {${x}$};
	\node (B1) at (1,0 )   {${x}$};
	\node (B1) at (2,0 )   {${x}$};
	\node (B1) at (0,1 )   {${x}$};
	\node (B1) at (0,2 )   {${x}$};
	\end{tikzpicture}
		\hspace{1cm}
\begin{tikzpicture}
	\draw [dashed] (0,0) -- (2,0);
	\draw [dashed](2,0) -- (2,2);
	\node (B1) at (.5,-.8 )   {$\bf{H_B}$};
	\node (B1) at (0,0 )   {${x}$};
	\node (B1) at (1,0 )   {${x}$};
	\node (B1) at (2,0 )   {${x}$};
	\node (B1) at (2,1 )   {${x}$};
	\node (B1) at (2,2 )   {${x}$};
	\end{tikzpicture}
	\hspace{1cm}
	\begin{tikzpicture}
	\draw [dashed](0,0) -- (2,0);
	\draw [dashed](2,0) -- (2,-2);
	\node (B1) at (0,0 )   {${x}$};
	\node (B1) at (1,0 )   {${x}$};
	\node (B1) at (2,0 )   {${x}$};
	\node (B1) at (2,-1 )   {${x}$};
	\node (B1) at (2,-2 )   {${x}$};
	\node (B1) at (2.3,-2.5 )   {$\bf{H_D}$};
	\end{tikzpicture}
	\caption{$H_A, H_C, H_B, H_D$ are four different shapes hook.}
	\label{hookfigexmple}
\end{figure} 
For each of the above matrix the pattern should be clear: In case of the matrix in \eqref{hookmat:2a}, all the entries right and below of the entry at the  $(i,i)^{\text{th}}\;(i=1,\cdots, m)$ position are same and clearly these entries form a hook of shape like $H_A$ in Figure \ref{hookfigexmple}. Similarly, for the matrix in \eqref{hookmat:7}, all the entries left and below of the entry at the  $(i,m+1-i)^{\text{th}}\;(i=1,\cdots m)$ position are same and these entries form a hook of shape like $H_D$ in Figure \ref{hookfigexmple}. Similarly the  entries in the matrices in \eqref{hookmat:2b} and \eqref{hookmat:8} form a hook of shape like $H_B$ and $H_C$ (in Figure \ref{hookfigexmple}) respectively. Moreover we have the following nice product formulas regarding the determinant of these hook matrices.
\begin{proposition}\label{elem hook them Am}
Let $A_m(x_1, \cdots, x_m)$ be a matrix defined as \eqref{hookmat:2a}. Then 
\[\det(A_m(x_1, \cdots, x_m))=  \prod_{i=1}^{m}(x_{i}-x_{i+1}), \text{ where } x_{m+1}=0.\]
\end{proposition}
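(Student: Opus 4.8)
The plan is to reduce $A_m:=A_m(x_1,\dots,x_m)$ to an almost-triangular form by elementary row operations and then finish with one cofactor expansion. First I would record the structural fact that the $(i,j)$ entry of $A_m$ equals $x_{m+1-\min(i,j)}$; in particular two consecutive rows agree in every column weakly to the left of the diagonal and differ by one fixed value in the columns strictly to the right of it.

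Next, I would perform the row operations $R_i \mapsto R_i - R_{i+1}$ for $i=1,2,\dots,m-1$, leaving $R_m$ untouched; these leave the determinant unchanged. By the structural description, the new $i$-th row (for $i<m$) is zero in columns $1,\dots,i$ and equal to the constant $x_{m+1-i}-x_{m-i}$ in columns $i+1,\dots,m$, while $R_m$ remains $(x_m,x_{m-1},\dots,x_2,x_1)$. In the resulting matrix the first column is $(0,\dots,0,x_m)^{\mathrm{T}}$, so expanding along the first column kills every term except the one from the $(m,1)$ entry, giving $\det(A_m)=(-1)^{m+1}x_m\det(M)$, where $M$ is obtained by deleting row $m$ and column $1$.

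The key observation is that $M$ is an $(m-1)\times(m-1)$ upper triangular matrix: its $i$-th row is zero before the diagonal and constant from the diagonal on, with diagonal entry $x_{m+1-i}-x_{m-i}$ for $i=1,\dots,m-1$. Hence $\det(M)=\prod_{i=1}^{m-1}(x_{m+1-i}-x_{m-i})$, and after reindexing by $k=m+1-i$ and pulling a factor $-1$ out of each of the $m-1$ factors this becomes $(-1)^{m-1}\prod_{i=1}^{m-1}(x_i-x_{i+1})$. Combining with the cofactor sign, $\det(A_m)=(-1)^{m+1}(-1)^{m-1}x_m\prod_{i=1}^{m-1}(x_i-x_{i+1})=x_m\prod_{i=1}^{m-1}(x_i-x_{i+1})$, which is precisely $\prod_{i=1}^{m}(x_i-x_{i+1})$ under the convention $x_{m+1}=0$.

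I do not expect a real obstacle: the argument is a direct computation. The only point requiring care is the sign bookkeeping — one must check that the cofactor sign $(-1)^{m+1}$ cancels against the $m-1$ sign changes incurred when rewriting each factor $x_{m+1-i}-x_{m-i}$ as $-(x_i-x_{i+1})$ after reindexing. For completeness one could also phrase this as an induction on $m$, using the fact that the bottom-right $(m-1)\times(m-1)$ submatrix of $A_m$ is again of the same form, or simply note that $A_m$ is the classical ``$\min$''-matrix $\bigl(x_{m+1-\min(i,j)}\bigr)_{i,j}$; but the direct reduction above is the most transparent route.
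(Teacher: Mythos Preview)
Your proof is correct and uses essentially the same idea as the paper: subtract consecutive rows to triangularize. The paper's version is marginally slicker because it subtracts in the opposite direction, taking $R_i\mapsto R_i-R_{i-1}$ for $i=2,\dots,m$; this leaves row $1$ equal to $(x_m,\dots,x_m)$ and makes row $i$ vanish in columns $1,\dots,i-1$ with constant entry $x_{m+1-i}-x_{m+2-i}$ from column $i$ on, so the matrix is already upper triangular with diagonal $x_m,\,x_{m-1}-x_m,\,\dots,\,x_1-x_2$ and the product drops out immediately---no cofactor expansion and no sign bookkeeping needed. Your choice $R_i\mapsto R_i-R_{i+1}$ pushes the nonzero diagonal to the last row instead, which is why you then need the column-$1$ expansion and the $(-1)^{m+1}(-1)^{m-1}$ cancellation; it works, but it is one step longer than necessary.
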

\begin{proof}
We apply the following row operations on the matrix $A_m(x_1, \cdots, x_m);$
\[R'_i=R_i-R_{i-1}, \text{ for }i=2, \cdots, m .\] Clearly, the resulting matrix (matrix obtained after row operations) is a diagonal matrix with diagonal entries $(x_{i}-x_{i+1}),$ for $i=m, m-1, \cdots, 1,$ where $x_{m+1}=0.$ Hence the result.	
\end{proof}
\begin{proposition}\label{elem hook thm B_m}
Let $B_m(x_1, \cdots, x_m)$ be a matrix defined as \eqref{hookmat:2b}.	Then\[\det B_m(x_1, \cdots, x_m)=\prod_{i=1}^{m}(x_{i}-x_{i+1}), \text{ where } x_{m+1}=0.\]
\end{proposition}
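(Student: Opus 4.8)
The plan is to mimic the proof of Proposition \ref{elem hook them Am}, using elementary row operations to reduce $B_m(x_1,\dots,x_m)$ to a triangular matrix. Observe first that the $(i,j)$ entry of $B_m$ is $x_{\max(i,j)}$, so the $i$-th row equals $(x_i,\dots,x_i,x_{i+1},x_{i+2},\dots,x_m)$, where the first $i$ coordinates are $x_i$ and the remaining coordinates run through $x_{i+1},\dots,x_m$.

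First I would perform the row operations
\[
R'_i = R_i - R_{i+1}, \qquad i = 1,\dots,m-1,
\]
carried out in this order, so that each operation uses the still-unmodified row $R_{i+1}$. Subtracting $R_{i+1}$ from $R_i$: in columns $1,\dots,i$ the entries become $x_i - x_{i+1}$; in column $i+1$ both rows carry $x_{i+1}$, giving $0$; and in columns $i+2,\dots,m$ the two rows already agree, again giving $0$. Hence the new $i$-th row is $(x_i-x_{i+1},\dots,x_i-x_{i+1},0,\dots,0)$ with exactly $i$ nonzero leading entries, while the last row is unchanged and equals $(x_m,\dots,x_m)$.

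The resulting matrix is therefore lower triangular with diagonal entries $x_1-x_2,\ x_2-x_3,\ \dots,\ x_{m-1}-x_m,\ x_m$. Multiplying out the diagonal and recalling the convention $x_{m+1}=0$ gives $\det B_m(x_1,\dots,x_m) = \prod_{i=1}^m (x_i - x_{i+1})$, as claimed.

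There is essentially no serious obstacle; the only point requiring a little care is to verify that after the substitutions $R'_i = R_i - R_{i+1}$ all entries strictly above the diagonal vanish — that is, that column $i+1$ as well as all later columns cancel — which is precisely where the hook pattern of $B_m$ enters. Alternatively, one can bypass the computation entirely: reversing the order of the rows and simultaneously the order of the columns of $A_m(x_1,\dots,x_m)$ turns the $(i,j)$ entry into $x_{m+1-\min(m+1-i,\,m+1-j)} = x_{\max(i,j)}$, so the result is exactly $B_m(x_1,\dots,x_m)$; since reversing $m$ rows and then $m$ columns multiplies the determinant by $(-1)^{\lfloor m/2\rfloor}\cdot(-1)^{\lfloor m/2\rfloor}=1$, Proposition \ref{elem hook them Am} yields the claim immediately.
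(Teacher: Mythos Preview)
Your proof is correct. Your primary argument --- subtracting each row from the one above to reduce $B_m$ directly to a lower triangular matrix with diagonal $(x_1-x_2,\dots,x_{m-1}-x_m,x_m)$ --- is a genuinely different route from the paper's; the paper does not row-reduce $B_m$ at all but instead applies the row and column reversals $R_i\leftrightarrow R_{m-i+1}$, $C_i\leftrightarrow C_{m-i+1}$ to $A_m$ and observes that the result is $B_m$, so that $\det B_m=\det A_m$ by Proposition~\ref{elem hook them Am}. That is exactly the alternative you sketch in your final paragraph. Your direct approach has the advantage of being self-contained (it does not rely on Proposition~\ref{elem hook them Am}), while the paper's approach has the advantage of setting up a template that it reuses verbatim for $C_m$, $D_m$, and the various block hook matrices later on.
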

\begin{proof}
We apply the following  row and column interchange on the matrix $A_m(x_1, \cdots, x_m);$ 
\begin{equation}\label{important row operation}
R_i\leftrightarrow R_{m-i+1}\text{ and } C_i\leftrightarrow C_{m-i+1}, \text{ for } i=
	\begin{cases}
1, 2, \cdots, \frac{m}{2}, & \text{if } m \text{ is even  } \\
1, 2, \cdots, \frac{m+1}{2}-1, & \text{ if } m \text{ is odd  },\
\end{cases}
\end{equation} 
Evidently the resulting matrix is $B_m(x_1, \cdots, x_m).$ This completes the proof. 
\end{proof}
\begin{proposition}\label{elem hook thm C_m}
Let $C_m(x_1, \cdots, x_m)$ be a matrix defined as \eqref{hookmat:8}. Then \[\det C_m(x_1, \cdots,  x_m)=
\begin{cases}
(-1)^{\frac{m}{2}}\times \prod\limits_{i=1}^{m}(x_{i}-x_{i+1}), & \text{if } m \text{ is even  } \\
(-1)^{\frac{m-1}{2}}\times \prod\limits_{i=1}^{m}(x_{i}-x_{i+1}), & \text{ if } m \text{ is odd  },\
\end{cases}\] \text{ where } $x_{m+1}=0.$
\end{proposition}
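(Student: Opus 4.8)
The plan is to deduce the formula from Proposition~\ref{elem hook them Am} by observing that $C_m(x_1, \cdots, x_m)$ is nothing but $A_m(x_1, \cdots, x_m)$ with the order of its rows reversed. Writing $R_1, \cdots, R_m$ for the rows of $A_m(x_1, \cdots, x_m)$ as displayed in \eqref{hookmat:2a}, a direct comparison with \eqref{hookmat:8} shows that the $i$-th row of $C_m(x_1, \cdots, x_m)$ coincides with $R_{m+1-i}$ for every $i$; equivalently, the $(i,j)$ entry $x_{m+1-\min(i,j)}$ of $A_m$ turns into the $(i,j)$ entry $x_{m+1-\min(j,\,m+1-i)}$ of $C_m$. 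Hence $C_m(x_1, \cdots, x_m)$ is obtained from $A_m(x_1, \cdots, x_m)$ through the row interchanges $R_i \leftrightarrow R_{m+1-i}$, in the same spirit as the reduction used in the proof of Proposition~\ref{elem hook thm B_m}.

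The remaining point is to track the sign. The reversal permutation $i \mapsto m+1-i$ of $\{1, \cdots, m\}$ decomposes into $\lfloor m/2 \rfloor$ disjoint transpositions (it fixes the central index when $m$ is odd), so each of these row swaps contributes a factor $-1$ and
\[
\det C_m(x_1, \cdots, x_m) = (-1)^{\lfloor m/2 \rfloor}\,\det A_m(x_1, \cdots, x_m).
\]
Since $\lfloor m/2 \rfloor$ equals $m/2$ for $m$ even and $(m-1)/2$ for $m$ odd, combining this identity with Proposition~\ref{elem hook them Am} (and the convention $x_{m+1}=0$) yields exactly the two cases in the statement.

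I do not anticipate a genuine obstacle here: the argument is essentially an application of Proposition~\ref{elem hook them Am}, and the only part requiring care is the entrywise verification that the row-reversed $A_m$ is literally $C_m$, together with the correct parity count for the reversal permutation. If one prefers a self-contained derivation not quoting the earlier proposition, an alternative is to apply the row operations $R'_i = R_i - R_{i+1}$ for $i = 1, \cdots, m-1$ directly to $C_m(x_1, \cdots, x_m)$: the resulting matrix is anti-triangular, with its column-$(m+1-i)$ entry in row $i$ equal to $x_i - x_{i+1}$ and all entries strictly to the left of this anti-diagonal vanishing, so expanding along the anti-diagonal reproduces both the product $\prod_{i=1}^m (x_i - x_{i+1})$ and the sign $(-1)^{\lfloor m/2 \rfloor}$. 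I would present the row-reversal argument as the main proof, since it is the shortest.
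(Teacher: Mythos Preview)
Your proposal is correct and follows essentially the same approach as the paper: the paper's proof also applies the row interchanges $R_i\leftrightarrow R_{m+1-i}$ from \eqref{important row operation} to $A_m(x_1,\dots,x_m)$ to obtain $C_m(x_1,\dots,x_m)$ and then invokes Proposition~\ref{elem hook them Am}. Your write-up is simply more explicit about the entrywise identification and the parity count $(-1)^{\lfloor m/2\rfloor}$, which the paper leaves implicit.
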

\begin{proof}
Here we apply the same row operations depicted as \eqref{important row operation} on the matrix $A_m(x_1, \cdots, x_m),$ and as a result we get the matrix $C_m(x_1, \cdots, x_m).$ So the proposition. 
\end{proof}
\begin{proposition}\label{elem hook thm D_m}
Let $D_m(x_1, \cdots, x_m)$ be a matrix defined as \eqref{hookmat:7}. Then \[\det D_m(x_1, \cdots, x_m)=
\begin{cases}
(-1)^{\frac{m}{2}}\times \prod\limits_{i=1}^{m}(x_{i}-x_{i+1}), & \text{if } m \text{ is even  } \\
(-1)^{\frac{m-1}{2}}\times \prod\limits_{i=1}^{m}(x_{i}-x_{i+1}), & \text{ if } m \text{ is odd,  }\
\end{cases}\] where $x_{m+1}=0.$
\end{proposition}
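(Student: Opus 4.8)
The plan is to reduce this statement to Proposition \ref{elem hook thm C_m} by a single transposition. Writing the $(i,j)$ entry of each matrix in closed form, one checks that $C_m(x_1,\dots,x_m)$ has $(i,j)$ entry $x_{\max(i,\,m+1-j)}$ while $D_m(x_1,\dots,x_m)$ has $(i,j)$ entry $x_{\max(m+1-i,\,j)}$. Interchanging the roles of $i$ and $j$ then shows immediately that $D_m(x_1,\dots,x_m)=C_m(x_1,\dots,x_m)^{T}$.

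First I would verify the two entrywise formulas directly against the displayed matrices \eqref{hookmat:8} and \eqref{hookmat:7}: read off the first row, a generic interior row, and the last row of each, and confirm they match $x_{\max(i,\,m+1-j)}$ and $x_{\max(m+1-i,\,j)}$ respectively; this is routine. Since a square matrix and its transpose have the same determinant, it follows that $\det D_m(x_1,\dots,x_m)=\det C_m(x_1,\dots,x_m)$, and the claimed product formula — including the parity-dependent sign $(-1)^{\lfloor m/2\rfloor}$ — is then exactly Proposition \ref{elem hook thm C_m}.

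Alternatively, and more in the spirit of the earlier proofs, one can start from $A_m(x_1,\dots,x_m)$ and apply only the column interchanges $C_i\leftrightarrow C_{m-i+1}$ occurring in \eqref{important row operation}: this reverses the column order of $A_m$, and using $A_m[i][j]=x_{\max(m+1-i,\,m+1-j)}$ one sees that the result is precisely $D_m(x_1,\dots,x_m)$. There are $\lfloor m/2\rfloor$ such interchanges, each flipping the sign of the determinant, so $\det D_m(x_1,\dots,x_m)=(-1)^{\lfloor m/2\rfloor}\det A_m(x_1,\dots,x_m)$, and one finishes with Proposition \ref{elem hook them Am}. Either route is free of genuine obstacles; the only thing needing care is the bookkeeping — confirming that the transpose (equivalently, the column reversal) really carries $C_m$ (resp.\ $A_m$) to $D_m$ for every $m$ rather than just in small cases, and matching the parity of $\lfloor m/2\rfloor$ to the two cases ($m$ even, $m$ odd) in the statement.
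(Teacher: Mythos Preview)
Your proposal is correct, and your alternative route --- reversing the columns of $A_m$ via the interchanges $C_i\leftrightarrow C_{m-i+1}$ from \eqref{important row operation} and then invoking Proposition~\ref{elem hook them Am} --- is precisely the paper's own proof. Your primary route via the observation $D_m=C_m^{T}$ and Proposition~\ref{elem hook thm C_m} is an equally short variant that the paper does not mention; both arguments amount to the same $\lfloor m/2\rfloor$ sign count, just bookkept through a different intermediate matrix.
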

\begin{proof}
Applying the same column operations described as \eqref{important row operation} on the matrix $A_m(x_1, \cdots, x_m),$ we get the matrix $D_m(x_1, \cdots, x_m).$ Hence the proposition. 
\end{proof}
\begin{defn} 
An $N\times N (N\geq2)$ block matrix $M$ is called a \emph{block hook matrix} if each block of $M$ is a hook matrix. 
\end{defn}
\begin{rem}
If $N=1,$ then the block hook matrix reduces to a hook matrix. 
\end{rem} 
\begin{example}\label{gen hook exam}
\begin{equation*}
\renewcommand\arraystretch{1.3}
\mleft(
\begin{array}{cc|cc}
x_2 & x_2&y_2& y_2 \\
x_2&x_1&y_2&y_1 \\
\hline
z_2&z_2&w_2&w_2\\
z_2&z_1&w_2&w_1
\end{array}
\mright), \renewcommand\arraystretch{1.3}
\mleft(
\begin{array}{cc|cc}
x_1 & x_2&y_2& y_1 \\
x_2&x_2&y_2&y_2 \\
\hline
z_1&z_2&w_2&w_1\\
z_2&z_2&w_2&w_2
\end{array}
\mright), \renewcommand\arraystretch{1.3}
\mleft(
\begin{array}{cc|cc}
x_1 & x_2&y_2& y_1 \\
x_2&x_2&y_2&y_2 \\
\hline
z_2&z_2&w_2&w_2\\
z_1&z_2&w_2&w_1
\end{array}
\mright),
\renewcommand\arraystretch{1.3}
\mleft(
\begin{array}{cc|cc}
x_2 & x_2&y_2& y_2 \\
x_2&x_1&y_1&y_2 \\
\hline
z_2&z_2&w_2&w_2\\
z_2&z_1&w_1&w_2
\end{array}
\mright).
\end{equation*}
\end{example}
Example \ref{gen hook exam} contains four $2\times 2$ block matrices, in which each block is a hook. In fact, the $1^{st}$ matrix contains four hooks of shape $H_A$ as depicted in Figure \ref{hookfigexmple}, whereas the hooks in the last matrix are of different shapes. Also it can be shown that, the determinant of any one of the above matrices gives a nice product formula. So, one natural question occurs: Is it true that the determinant of any block hook matrix admits such a product formula? In this paper, we formulate different classes of block hook matrices of order $Nm$ ($N\times N$ block matrix and order of each block is $m$ ) and prove that the determinants of all these block hook matrices can be written as $\prod\limits_{i=1}^{m}\det(X_{(i, i+1)})\text{ (upto sign)} ,$  where
\begin{align}
X_{(i, i+1)}&=\left(
\begin{array}{ccccc}
x^{(1,1)}_{i}-x^{(1,1)}_{i+1}&x^{(1,2)}_{i}-x^{(1,2)}_{i+1}& \cdots &x^{(1,N)}_{i}-x^{(1,N)}_{i+1}\\
x^{(2,1)}_{i}-x^{(2,1)}_{i+1}&x^{(2,2)}_{i}-x^{(2,2)}_{i+1}&\cdots&x^{(2,N)}_{i}-x^{(2,N)}_{i+1}\\
\vdots&\vdots&\ddots&\vdots\\
x^{(N,1)}_{i}-x^{(N,1)}_{i+1}&x^{(N,2)}_{i}-x^{(N,2)}_{i+1}&\cdots&x^{(N,N)}_{i}-x^{(N,N)}_{i+1}	
\end{array}
\right).
\label{hookmat:1}
\end{align}
Moreover, we give  combinatorial explanation of these product formulas.
Throughout this paper we denote $i\times j$ zero matrix by
$O_{i, j}, R_i\leftrightarrow R_j (C_i\leftrightarrow C_j)$ denotes the row (column) interchange between $i^{\text{th}}$ and $j^{\text{th}}$ row (column) of a matrix, we denote the set $\{1, 2, \cdots, m\}$  by $[m].$ 

Now, let us briefly summarize the content. In Section $2,$ we consider block hook matrices containing same shape hooks and derive the product formulas for the determinants of these matrices. In Section $3,$ we focus on the block hook  matrices formed by hooks of two different shapes and we show that the determinant of these matrices admit nice product formulas. In Section $4,$ we deal with block hook matrices containing hooks of four different shapes and find similar product formulas. In Section $5,$ we give  combinatorial interpretations of factorization property of block hook determinants.      
\section{Block matrices containing hooks of same shape}
In this section, we evaluate the determinants of block hook matrices, which are block matrices, in which each block is a hook and shape of all hooks are same. Now, we introduce an $N\times N$ block matrix $A(N,m)$ in the following way;
\begin{align}\label{hookmat:3}
A(N,m)=(A_{ij}), \text{ where } 1\leq i, j\leq N \text{ and each block } A_{ij}=A_m(x^{(i,j)}_{1}, \cdots, x^{(i,j)}_{m}).
\end{align}
So $A(N,m)$ is a hook matrix of order $Nm.$ For example,
\begin{align}\label{A_{3,2}hookmat:5}
A(3,2)=\renewcommand\arraystretch{1.6}\mleft(
\begin{array}{cc|cc|cc}
x^{(1,1)}_{2} & x^{(1,1)}_{2} & x^{(1,2)}_{2}&x^{(1,2)}_{2}&x^{(1,3)}_{2}&x^{(1,3)}_{2} \\
x^{(1,1)}_{2} & x^{(1,1)}_{1} & x^{(1,2)}_{2}&x^{(1,2)}_{1}&x^{(1,3)}_{2}&x^{(1,3)}_{1} \\
\hline
x^{(2,1)}_{2} & x^{(2,1)}_{2} & x^{(2,2)}_{2}&x^{(2,2)}_{2}&x^{(2,3)}_{2}&x^{(2,3)}_{2} \\
x^{(2,1)}_{2} & x^{(2,1)}_{1} & x^{(2,2)}_{2}&x^{(2,2)}_{1}&x^{(2,3)}_{2}&x^{(2,3)}_{1} \\
\hline
x^{(3,1)}_{2} & x^{(3,1)}_{2} & x^{(3,2)}_{2}&x^{(3,2)}_{2}&x^{(3,3)}_{2}&x^{(3,3)}_{2} \\
x^{(3,1)}_{2} & x^{(3,1)}_{1} & x^{(3,2)}_{2}&x^{(3,2)}_{1}&x^{(3,3)}_{2}&x^{(3,3)}_{1} 
\end{array}
\mright)
\end{align}
is a block hook matrix, whose each block is a hook like \eqref{hookmat:2a}. 
\begin{theorem}\label{hookmatA}
Let $A(N,m)$ be a block hook matrix of order $Nm$ defined as \eqref{hookmat:3}. Then  \[\det(A(N,m))=\prod\limits_{i=1}^{m}\det(X_{(i, i+1)}),\] where $X_{(i, i+1)}$ is defined as \eqref{hookmat:1} and $x^{(i,j)}_{m+1}=0,$  for all $1\leq i, j\leq N.$
\end{theorem}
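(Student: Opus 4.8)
The plan is to reduce the block statement to the scalar statement (Proposition~\ref{elem hook them Am}) by performing, \emph{simultaneously in every block}, exactly the row operations used there. Recall that in $A_m(x_1,\dots,x_m)$ the operations $R'_i = R_i - R_{i-1}$ for $i=2,\dots,m$ turn the hook into a diagonal matrix with entries $x_m, x_{m-1},\dots,x_1$ (with $x_{m+1}=0$). In the block matrix $A(N,m)$ of order $Nm$, the $k$-th row of block-row $p$ is the concatenation of the $k$-th rows of the hooks $A_m(x^{(p,1)}_\bullet),\dots,A_m(x^{(p,N)}_\bullet)$. So I would apply, for each block-row $p\in[N]$ and each $i\in\{2,\dots,m\}$, the global row operation that subtracts row $(i-1)$ of block-row $p$ from row $i$ of block-row $p$. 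Since row operations within disjoint sets of rows commute and each only touches rows inside a single block-row, the net effect on block $A_{pq}$ is precisely the within-block operation from Proposition~\ref{elem hook them Am}: block $A_{pq}=A_m(x^{(p,q)}_1,\dots,x^{(p,q)}_m)$ becomes the diagonal matrix $\mathrm{diag}\bigl(x^{(p,q)}_m - 0,\ x^{(p,q)}_{m-1}-x^{(p,q)}_m,\ \dots,\ x^{(p,q)}_1 - x^{(p,q)}_2\bigr)$, i.e.\ $\mathrm{diag}\bigl(x^{(p,q)}_i - x^{(p,q)}_{i+1}\bigr)$ read off in the order $i=m,m-1,\dots,1$. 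These operations do not change the determinant, so $\det A(N,m)$ equals the determinant of the resulting block matrix whose $(p,q)$ block is that diagonal matrix.

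Next I would compute this determinant by a symmetric permutation of rows and columns that regroups the $Nm$ coordinates by ``internal index $i$'' rather than by ``block index $p$''. Concretely, relabel coordinate $(p,k)$ (block-row/column $p$, internal position $k$) by the pair $(k,p)$; conjugating by this permutation matrix $P$ leaves the determinant unchanged (it is $P M P^{-1}$, or equivalently one checks the sign of the row permutation cancels that of the identical column permutation). After this regrouping the matrix becomes block-diagonal with $m$ blocks, where the block indexed by internal position $i$ is exactly the $N\times N$ matrix whose $(p,q)$ entry is $x^{(p,q)}_i - x^{(p,q)}_{i+1}$ — that is, the matrix $X_{(i,i+1)}$ of \eqref{hookmat:1}. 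Hence $\det A(N,m) = \prod_{i=1}^m \det X_{(i,i+1)}$, as claimed.

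I expect the only real care needed is bookkeeping: verifying that the block-diagonalizing permutation is a genuine conjugation (so no stray sign appears), and confirming that the diagonal entries, read in the order the operations produce them ($i=m$ down to $i=1$), assemble into $X_{(i,i+1)}$ with the correct index $i$ rather than a reversed or shifted one. Both are routine once the indexing convention of \eqref{hookmat:1} is pinned down, and since Proposition~\ref{elem hook them Am} already handles the $N=1$ case, the argument is essentially a ``do it in every block at once, then sort the coordinates'' reduction. Writing $x^{(p,q)}_{m+1}=0$ throughout makes the $i=m$ diagonal entry $x^{(p,q)}_m$ fit the uniform formula $x^{(p,q)}_i - x^{(p,q)}_{i+1}$, matching the hypothesis of the theorem.
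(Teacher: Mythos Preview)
Your overall strategy is sound, but one claim needs correction: the row operations $R'_i=R_i-R_{i-1}$ applied to $A_m(x_1,\dots,x_m)$ produce an \emph{upper triangular} matrix, not a diagonal one (the proof of Proposition~\ref{elem hook them Am} in the paper misstates this as well). Concretely, row $i$ becomes $(0,\dots,0,\,x_{m-i+1}-x_{m-i+2},\dots,x_{m-i+1}-x_{m-i+2})$, with $i-1$ leading zeros and the remaining entries all equal. Consequently, after performing these operations in every block and conjugating by your permutation $P$, the result is block \emph{upper triangular} in the internal index, not block diagonal: the $(i,j)$ block (internal indices) vanishes for $j<i$ and, for $j\ge i$, equals the $N\times N$ matrix with $(p,q)$ entry $x^{(p,q)}_{m-i+1}-x^{(p,q)}_{m-i+2}$. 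The diagonal blocks are still the matrices $X_{(k,k+1)}$ (with $k=m-i+1$), so the determinant is the claimed product and your argument goes through once ``diagonal'' is replaced by ``upper triangular'' throughout.

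With that fix, your route is correct and genuinely different from the paper's. The paper proceeds by induction on $m$: it performs only the single subtraction $R_{km}\leftarrow R_{km}-R_{km-1}$ in each block row, then permutes rows and columns to reach a block-lower-triangular form with $X_{(1,2)}$ in the top-left corner and $A(N,m-1)$ in the bottom-right, peeling off one factor at a time via Laplace expansion and the inductive hypothesis. You instead carry out all $m-1$ subtractions inside each block simultaneously and extract all $m$ factors in a single permutation step, avoiding induction altogether. The paper's argument is more incremental and closer in spirit to the later combinatorial proof; yours is more direct and makes the full factorization visible at once.
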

\begin{proof}
We prove the theorem by applying induction on $m.$ For $m=1,$ clearly the matrix \[A(N,1)=\renewcommand\arraystretch{1.6}\mleft(
\begin{array}{c|c|c|c}
x^{(1,1)}_{1} & x^{(1,2)}_{1} & \cdots&x^{(1,N)}_{1} \\
\hline
x^{(2,1)}_{1} & x^{(2,2)}_{1} & \cdots&x^{(2,N)}_{1} \\
\hline
\vdots&\vdots&\ddots&\vdots\\
\hline
x^{(N,1)}_{1} & x^{(N,2)}_{1} & \cdots&x^{(N,N)}_{1} 
\end{array}
\mright)=X_{(1,2)}, \text{ with } x^{(i,j)}_{2}=0, \text{ for all } 1\leq i, j\leq N .\]  So, the base case is true. Suppose the result is true for all such matrices $A(N,(m-1)).$ Now to complete the proof we want to go through the following row operations on the matrix $A(N,m);$ \[R_i'=R_{i}-R_{i-1}, \text{ for } i=m, 2m, \cdots, Nm.\]
Then the matrix, we obtain thereby is as follows:

All the rows of each block  $A_{ij}$ remain unchanged except the last row. The last row of each block $A_{ij}$ looks like  $0, 0, \cdots, (x^{(i,j)}_{1}-x^{(i,j)}_{2}).$  So for  $\ell=1, 2, \cdots, N,$ the $(\ell m)^\text{{th}}$ row of the matrix $A(N,m)$ is \[0, 0, \cdots, (x^{(\ell,1)}_{1}-x^{(\ell,1)}_{2}), 0, 0, \cdots,  (x^{(\ell,2)}_{1}-x^{(\ell,2)}_{2}), \cdots, 0, 0, \cdots, (x^{(\ell,N)}_{1}-x^{(\ell,N)}_{2}).\]
Now we apply the successive row interchange on the resulting matrix (after performing row operations on the matrix $A(N,m)$) to arrange the rows in the following order;
\[R_m, R_{2m}, \cdots, R_{Nm},  R_1,  \cdots, R_{m-1}, R_{m+1}, \cdots, R_{2m-1}, \cdots, R_{(N-1)m+1}, \cdots, R_{Nm-1}.\] Then by successive column interchange we arrange the columns in the following order;
\[C_m, C_{2m}, \cdots, C_{Nm},  C_{1},  \cdots, C_{m-1}, C_{m+1}, \cdots, C_{2m-1}, \cdots, C_{(N-1)m+1}, \cdots, C_{Nm-1}.\] And finally we obtain the matrix of the form 
\[
\renewcommand\arraystretch{1.3}
\mleft(
\begin{array}{c|c}
X_{(1, 2)} & O_{N, (Nm-N)} \\
\hline
\ast &  A(N,(m-1))
\end{array}
\mright),
\]
where $ A(N,(m-1))$ is block matrix and each block is  $A_{m-1}(x^{(i,j)}_{2}, \cdots, x^{(i,j)}_{m}).$ Therefore, $A(N,(m-1))$ is a block hook matrix of order $N(m-1).$ 
Now, without doubt we can write
\begin{align}\label{partition of matrix}
\det(A(N,m))=\renewcommand\arraystretch{1.3}
\mleft(
\begin{array}{c|c}
X_{(1, 2)} & O_{N, (Nm-N)} \\
\hline
\ast & A(N,(m-1))
\end{array}
\mright).
\end{align} 
Again by the Laplace expansion 
\begin{align}\label{laplace exp}
\det\renewcommand\arraystretch{1.3}
\mleft(
\begin{array}{c|c}
X_{(1, 2)} & O_{N, (Nm-N)} \\
\hline
\ast &  A(N,(m-1))
\end{array}
\mright)&=\det(X_{(1, 2)})\times \det(A(N,(m-1))).
\end{align}
Now we set $x^{(i,j)}_k=y^{(i,j)}_{k-1}, \text{ for } k=2, 3, \cdots, m$ and $1\leq i, j\leq N.$ Then $A_{m-1}(x^{(i,j)}_{2}, \cdots, x^{(i,j)}_{m})=A_{m-1}(y^{(i,j)}_{1}, \cdots, y^{(i,j)}_{m-1}).$ Again by inductive hypothesis we can write
\begin{equation}\label{replac Y} 
\det(A(N,(m-1)))=\prod\limits_{i=1}^{m-1}\det(Y_{(i, i+1)}),
\end{equation} 
where $Y_{(i, i+1)}$ is the matrix obtained by putting $x^{(r,s)}_{i+1}-x^{(r,s)}_{i+2}=y^{(r,s)}_{i}-y^{(r,s)}_{i+1}$ in $X_{(i+1,i+2)}$ for all $ 1\leq r, s\leq N,$ and $y^{(i,j)}_{m}=x^{(i,j)}_{m+1}=0.$ Clearly
\begin{align}\label{X=Y}
\prod\limits_{i=1}^{m-1}\det(Y_{(i, i+1)})=\prod\limits_{i=2}^{m}\det(X_{(i, i+1)}).
\end{align}
Therefore, by \eqref{partition of matrix}, \eqref{laplace exp}, \eqref{replac Y} and \eqref{X=Y} we get \[\det(A(N,m))=\prod\limits_{i=1}^{m}\det(X_{(i, i+1)}).\]
\end{proof}
Here we think about the determinantal formulas for the block hook matrix formed by the hook matrices like $B_{m}(x_1, \cdots, x_m).$ Now we define an $N\times N$ block matrix $B(N, m)$ in the following way;
\begin{align}\label{hookmat B}
B(N, m)=(B_{ij}), \text{ where } 1\leq i, j\leq N \text{ and } B_{ij}=B_m(x^{(i,j)}_{1}, \cdots, x^{(i,j)}_{m}).
\end{align}
So $B(N, m)$ is a block hook matrix of order $Nm.$ For example,
\begin{align}\label{B_{3,2}hookmat:5}
B(3, 2)=\renewcommand\arraystretch{1.6}\mleft(
\begin{array}{cc|cc|cc}
x^{(1,1)}_{1} & x^{(1,1)}_{2} & x^{(1,2)}_{1}&x^{(1,2)}_{2}&x^{(1,3)}_{1}&x^{(1,3)}_{2} \\
x^{(1,1)}_{2} &x^{(1,1)}_{2} & x^{(1,2)}_{2}&x^{(1,2)}_{2}&x^{(1,3)}_{2}&x^{(1,3)}_{2} \\
\hline
x^{(2,1)}_{1} & x^{(2,1)}_{2} & x^{(2,2)}_{1}&x^{(2,2)}_{2}&x^{(2,3)}_{1}&x^{(2,3)}_{2} \\
x^{(2,1)}_{2} & x^{(2,1)}_{2} & x^{(2,2)}_{2}&x^{(2,2)}_{2}&x^{(2,3)}_{2}&x^{(2,3)}_{2} \\
\hline
x^{(3,1)}_{1} & x^{(3,1)}_{2} & x^{(3,2)}_{1}&x^{(3,2)}_{2}&x^{(3,3)}_{1}&x^{(3,3)}_{2} \\
x^{(3,1)}_{2} & x^{(3,1)}_{2} & x^{(3,2)}_{2}&x^{(3,2)}_{2}&x^{(3,3)}_{2}&x^{(3,3)}_{2} 
\end{array}
\mright). 
\end{align}
\begin{theorem}\label{hookmatB}
Let $B(N, m)$ be a block hook matrix of order $Nm$ defined as \eqref{hookmat B}. Then \[\det(B(N, m))=\prod\limits_{i=1}^{m}\det(X_{(i,i+1)}),\] where $X_{(i,i+1)}$ is defined by \eqref{hookmat:1} and $x^{(i,j)}_{m+1}=0,$  for all $1\leq i, j \leq N.$	
\end{theorem}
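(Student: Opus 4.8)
The plan is to mimic the strategy already used for Theorem~\ref{hookmatA}, reducing the $B$-case to the $A$-case by the same row and column interchanges that turned $A_m$ into $B_m$ in Proposition~\ref{elem hook thm B_m}. Recall that $B_m(x_1,\dots,x_m)$ was obtained from $A_m(x_1,\dots,x_m)$ by the interchanges $R_i\leftrightarrow R_{m-i+1}$ and $C_i\leftrightarrow C_{m-i+1}$ described in \eqref{important row operation}. The key observation is that these are \emph{symmetric} operations within each diagonal $m\times m$ block, so when we apply them simultaneously inside every block of $B(N,m)$ (that is, on the global matrix we perform $R_{(\ell-1)m+i}\leftrightarrow R_{(\ell-1)m+m-i+1}$ and the analogous column swaps for every block-index $\ell=1,\dots,N$), each block $B_{ij}=B_m(x^{(i,j)}_1,\dots,x^{(i,j)}_m)$ is transformed into $A_m(x^{(i,j)}_1,\dots,x^{(i,j)}_m)=A_{ij}$, and the resulting global matrix is exactly $A(N,m)$.

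First I would make this block-wise reindexing precise: since the reordering permutation acts identically on each of the $N$ consecutive blocks of $m$ indices, it can be written as $N$ disjoint copies of the single permutation $\pi\in S_m$ from \eqref{important row operation}, applied to the rows and again (the same $\pi^{\otimes N}$) to the columns. Hence the overall sign picked up is $\bigl(\operatorname{sgn}\pi\bigr)^{N}\cdot\bigl(\operatorname{sgn}\pi\bigr)^{N}=\bigl((\operatorname{sgn}\pi)^2\bigr)^{N}=1$. So $\det(B(N,m))=\det(A(N,m))$ with no sign change. Then I would invoke Theorem~\ref{hookmatA} to conclude $\det(B(N,m))=\det(A(N,m))=\prod_{i=1}^m\det(X_{(i,i+1)})$, noting that the matrices $X_{(i,i+1)}$ in \eqref{hookmat:1} depend only on the parameters $x^{(i,j)}_k$ and not on how the blocks are internally ordered, so they are literally the same matrices for the $A$- and $B$-problems with the same input data.

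There is essentially no main obstacle here; the only thing to be careful about is the bookkeeping that the row-permutation and the column-permutation are the \emph{same} permutation applied to both, so that their signs multiply to a square and the sign is trivial — this is what makes Proposition~\ref{elem hook thm B_m} yield an \emph{equality} (rather than a sign-twisted equality) between $\det B_m$ and $\det A_m$, and the same reasoning lifts verbatim to the block setting. Alternatively, if one prefers a self-contained argument, one could run the identical induction on $m$ used in Theorem~\ref{hookmatA}: the row operations $R'_i=R_i-R_{i-1}$ for the $B$-blocks (now on the \emph{first} row of each block, since the repeated entries sit in the last row/column after the reflection) produce a block-triangular matrix whose top-left $N\times N$ corner is again $X_{(1,2)}$ and whose lower-right corner is $B(N,m-1)$ in the shifted variables, and the inductive step closes exactly as before via Laplace expansion. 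Either route is routine, so I would present the short reduction to Theorem~\ref{hookmatA}.
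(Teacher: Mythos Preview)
Your proposal is correct and follows essentially the same approach as the paper: the paper also proves Theorem~\ref{hookmatB} by applying the block-wise row and column interchanges $R_{km+i}\leftrightarrow R_{km+(m-i+1)}$ and $C_{km+i}\leftrightarrow C_{km+(m-i+1)}$ for $k=0,\dots,N-1$ (your $\ell-1$) to pass between $A(N,m)$ and $B(N,m)$, then invokes Theorem~\ref{hookmatA}. Your write-up is in fact slightly more careful than the paper's, since you make explicit the sign bookkeeping $(\operatorname{sgn}\pi)^{2N}=1$ that the paper leaves implicit.
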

\begin{proof}
We proof this theorem by performing some row and column operations on $A(N,m),$ so that the resulting matrix is $B(N,m).$ In fact, we perform the following row and column operations; 
\begin{equation}\label{important row operation1}
R_{km+i}\leftrightarrow R_{km+(m-i+1)}\text{ and } C_{km+i}\leftrightarrow C_{km+(m-i+1)},  \text{ where } k=0, 1, \cdots, (N-1) \text{ and }
\end{equation}
\begin{equation}\label{important row operation1 condition}
i=
\begin{cases}
1, 2, \cdots, \frac{m}{2}, & \text{if } m \text{ is even  } \\
1, 2, \cdots, \frac{m+1}{2}-1, & \text{ if } m \text{ is odd  }.\
\end{cases}
\end{equation}
Obviously, the resulting matrix is $B(N,m).$ Hence the result.
\end{proof}
Now we evaluate the determinant of the block hook matrix $C(N,m)$ formed by the hook matrices of the form $C_m(x_1, \cdots,  x_m).$ Let us define the matrix $C(N,m)$ as follows: 
\begin{align}\label{hook9}
C(N,m)=(C_{ij}), \text{ where }   C_{ij}=C_{m}(x^{(i,j)}_{1}, \cdots , x^{(i,j)}_{m})  \text{ and }  1\leq i, j\leq N.
\end{align}
For example,
\begin{align}\label{C_{3,2}hookmat:1}
C(3,2)=\renewcommand\arraystretch{1.6}\mleft(
\begin{array}{cc|cc|cc}
x^{(1,1)}_{2} & x^{(1,1)}_{1} & x^{(1,2)}_{2}&x^{(1,2)}_{1}&x^{(1,3)}_{2}&x^{(1,3)}_{1} \\
x^{(1,1)}_{2} & x^{(1,1)}_{2} & x^{(1,2)}_{2}&x^{(1,2)}_{2}&x^{(1,3)}_{2}&x^{(1,3)}_{2} \\
\hline
x^{(2,1)}_{2} & x^{(2,1)}_{1} & x^{(2,2)}_{2}&x^{(2,2)}_{1}&x^{(2,3)}_{2}&x^{(2,3)}_{1} \\
x^{(2,1)}_{2} & x^{(2,1)}_{2} & x^{(2,2)}_{2}&x^{(2,2)}_{2}&x^{(2,3)}_{2}&x^{(2,3)}_{2} \\
\hline
x^{(3,1)}_{2} & x^{(3,1)}_{1} & x^{(3,2)}_{2}&x^{(3,2)}_{1}&x^{(3,3)}_{2}&x^{(3,3)}_{1} \\
x^{(3,1)}_{2} & x^{(3,1)}_{2} & x^{(3,2)}_{2}&x^{(3,2)}_{2}&x^{(3,3)}_{2}&x^{(3,3)}_{2} 
\end{array}
\mright).
\end{align} 
\begin{theorem}\label{hooktypeC}
Let $C(N,m)$ be a block hook matrix of order $Nm$ defined as \eqref{hook9}. Then \[\det(C(N,m))=
\begin{cases}
(-1)^{\frac{Nm}{2}}\times\prod\limits_{i=1}^{m}\det(X_{(i, i+1)}), & \text{if } m \text{ is even  } \\
(-1)^{\frac{N(m-1)}{2}}\times\prod\limits_{i=1}^{m}\det(X_{(i, i+1)}), & \text{if } m \text{ is odd,  }
\end{cases}\] where $X_{(i, i+1)}$ is defined by \eqref{hookmat:1} and $x^{(i,j)}_{m+1}=0$  for all $1\leq i, j \leq N.$ 
\end{theorem}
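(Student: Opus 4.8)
The plan is to follow the strategy of the proof of Theorem~\ref{hookmatB}: instead of repeating the induction of Theorem~\ref{hookmatA}, I would obtain $C(N,m)$ from $A(N,m)$ by a sequence of elementary row interchanges, invoke Theorem~\ref{hookmatA}, and keep track of the sign that the interchanges introduce. The starting observation is the one already used in the proof of Proposition~\ref{elem hook thm C_m}: the hook matrix $C_m(x_1,\dots,x_m)$ of \eqref{hookmat:8} is nothing but the hook matrix $A_m(x_1,\dots,x_m)$ of \eqref{hookmat:2a} with the order of its $m$ rows reversed. This is immediate at the level of entries — the $(i,j)$ entry of $A_m$ depends only on $\min(i,j)$, and replacing $i$ by $m+1-i$ throughout turns that pattern into the entry pattern of $C_m$ — but one may also simply cite the mechanism of Proposition~\ref{elem hook thm C_m}.

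Since in $A(N,m)$ every block $A_{ij}$ is equal to $A_m(x^{(i,j)}_1,\dots,x^{(i,j)}_m)$ and all blocks in a fixed block-row occupy the same $m$ rows, reversing the order of those $m$ rows (simultaneously across the whole block-row) replaces every block $A_{ij}$ by $C_m(x^{(i,j)}_1,\dots,x^{(i,j)}_m)=C_{ij}$ and leaves the column structure untouched; carrying this out for each of the $N$ block-rows turns $A(N,m)$ into precisely the matrix $C(N,m)$ of \eqref{hook9}. Concretely, I would apply the row interchanges $R_{km+i}\leftrightarrow R_{km+(m-i+1)}$ for $k=0,1,\dots,N-1$ with $i$ ranging as in \eqref{important row operation1 condition} — that is, the row part of \eqref{important row operation1} without the accompanying column interchanges, which is the only change from the proof of Theorem~\ref{hookmatB}.

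It then remains to count the sign. Reversing $m$ rows is a product of $\lfloor m/2\rfloor$ transpositions, hence has sign $(-1)^{\lfloor m/2\rfloor}$, and performing it independently in each of the $N$ block-rows contributes $(-1)^{N\lfloor m/2\rfloor}$ overall, so that $\det(C(N,m))=(-1)^{N\lfloor m/2\rfloor}\det(A(N,m))$. Applying Theorem~\ref{hookmatA} gives $\det(A(N,m))=\prod_{i=1}^{m}\det(X_{(i,i+1)})$ with $X_{(i,i+1)}$ as in \eqref{hookmat:1} and $x^{(i,j)}_{m+1}=0$; writing $\lfloor m/2\rfloor=m/2$ for even $m$ and $\lfloor m/2\rfloor=(m-1)/2$ for odd $m$ then yields exactly the two cases in the statement.

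I do not expect a real obstacle: once the reduction is in place the argument is very short. The only points demanding care are the verification of the first step — checking that reversing the rows inside a single block genuinely sends the $A_m$ pattern to the $C_m$ pattern while leaving the column pattern alone, so that the reduction really is block-by-block — and an accurate count of the transpositions involved, since this parity is exactly what produces the factor $(-1)^{N\lfloor m/2\rfloor}$ and hence the even/odd dichotomy in the conclusion.
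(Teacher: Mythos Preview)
Your proposal is correct and is essentially the paper's own proof: the paper obtains $C(N,m)$ from $A(N,m)$ by applying only the row interchanges of \eqref{important row operation1}--\eqref{important row operation1 condition} (i.e., the row part of the operations used for Theorem~\ref{hookmatB}), then invokes Theorem~\ref{hookmatA}. Your write-up is in fact more detailed than the paper's, which leaves the sign count and the block-by-block verification implicit.
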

\begin{proof}
If we apply the same row operations on the matrix $A(N,m)$ as in the proof of Theorem \ref{hookmatB}, we get the matrix $C(N,m).$ Hence the theorem.
\end{proof}
In this place we want to establish the determinantal formula for the block hook matrix $D(N,m),$ where $D(N,m)$ is an $Nm$ ordered matrix defined as follows: 
\begin{align}\label{hook matrix D}
\text{ Now, } D(N,m)=(D_{ij}), \text{ where } D_{ij}=D_{m}(x^{(i,j)}_{1},  \cdots, x^{(i,j)}_{m}) \text{ and }  1\leq i, j\leq N.
\end{align}
For example, 
\begin{align}\label{D_{3,2}hookmat:1}
D(3,2)=\renewcommand\arraystretch{1.6}\mleft(
\begin{array}{cc|cc|cc}
x^{(1,1)}_{2} & x^{(1,1)}_{2} & x^{(1,2)}_{2}&x^{(1,2)}_{2}&x^{(1,3)}_{2}&x^{(1,3)}_{2} \\
x^{(1,1)}_{1} & x^{(1,1)}_{2} & x^{(1,2)}_{1}&x^{(1,2)}_{2}&x^{(1,3)}_{1}&x^{(1,3)}_{2} \\
\hline
x^{(2,1)}_{2} & x^{(2,1)}_{2} & x^{(2,2)}_{2}&x^{(2,2)}_{2}&x^{(2,3)}_{2}&x^{(2,3)}_{2} \\
x^{(2,1)}_{1} & x^{(2,1)}_{2} & x^{(2,2)}_{1}&x^{(2,2)}_{2}&x^{(2,3)}_{1}&x^{(2,3)}_{2} \\
\hline
x^{(3,1)}_{2} & x^{(3,1)}_{2} & x^{(3,2)}_{2}&x^{(3,2)}_{2}&x^{(3,3)}_{2}&x^{(3,3)}_{2} \\
x^{(3,1)}_{1} & x^{(3,1)}_{2} & x^{(3,2)}_{1}&x^{(3,2)}_{2}&x^{(3,3)}_{1}&x^{(3,3)}_{2} 
\end{array}
\mright).
\end{align}
\begin{theorem}\label{hooktypeD}
Let $D(N,m)$ be a block hook  matrix of order $Nm$ defined as \eqref{hook matrix D}. Then \[\det(D(N,m))=\begin{cases}
(-1)^{\frac{Nm}{2}}\times\prod\limits_{i=1}^{m}\det(X_{(i, i+1)}), & \text{if } m \text{ is even  } \\
(-1)^{\frac{N(m-1)}{2}}\times\prod\limits_{i=1}^{m}\det(X_{(i, i+1)}), & \text{if } m \text{ is odd,  }
\end{cases}\] where $X_{(i, i+1)}$ is defined by \eqref{hookmat:1} and $x^{(i,j)}_{m+1}=0$  for all $1\leq i, j \leq N.$ 
\end{theorem}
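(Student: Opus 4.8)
The plan is to follow exactly the pattern used for Theorems \ref{hookmatB} and \ref{hooktypeC}, but with column interchanges replacing row interchanges, thereby reducing everything to Theorem \ref{hookmatA}. Recall from the proof of Proposition \ref{elem hook thm D_m} that applying the column interchanges $C_i \leftrightarrow C_{m-i+1}$, for $i$ ranging as in \eqref{important row operation1 condition}, to the single hook $A_m(x_1,\ldots,x_m)$ produces $D_m(x_1,\ldots,x_m)$; concretely, reversing the order of the columns of an $H_A$-shaped hook yields an $H_D$-shaped hook with the same parameters. The first step is therefore to perform, on $A(N,m)$, the column interchanges $C_{km+i} \leftrightarrow C_{km+(m-i+1)}$ for $k = 0,1,\ldots,N-1$ and $i$ as in \eqref{important row operation1 condition}.

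The crucial observation is that each interchange $C_{km+i} \leftrightarrow C_{km+(m-i+1)}$ permutes columns only \emph{within} the $k$-th block-column, so carrying out all of them at once acts on each block $A_{ij} = A_m(x^{(i,j)}_1,\ldots,x^{(i,j)}_m)$ independently and sends it to $D_m(x^{(i,j)}_1,\ldots,x^{(i,j)}_m) = D_{ij}$. Hence these column operations transform $A(N,m)$ into $D(N,m)$.

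It remains to track the sign. Each column interchange multiplies the determinant by $-1$, and by \eqref{important row operation1 condition} there are $m/2$ interchanges within a fixed block-column when $m$ is even and $(m-1)/2$ when $m$ is odd; repeating this for each $k = 0,1,\ldots,N-1$ gives $Nm/2$ interchanges in total when $m$ is even and $N(m-1)/2$ when $m$ is odd. Consequently $\det(D(N,m)) = (-1)^{Nm/2}\det(A(N,m))$ for $m$ even and $\det(D(N,m)) = (-1)^{N(m-1)/2}\det(A(N,m))$ for $m$ odd (a sign $\pm 1$ is its own inverse, so it is harmless to solve for $\det(D(N,m))$ this way). Applying Theorem \ref{hookmatA}, which gives $\det(A(N,m)) = \prod_{i=1}^{m}\det(X_{(i,i+1)})$, then yields the asserted formula.

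There is essentially no substantial obstacle here: the only points needing care are the entrywise check that a within-block column reversal carries the $A_m$ pattern to the $D_m$ pattern (already contained in Proposition \ref{elem hook thm D_m}) and the parity count for the accumulated sign, both of which are routine.
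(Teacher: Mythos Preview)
Your proposal is correct and follows essentially the same approach as the paper: the paper's proof simply says that applying the column operations from the proof of Theorem \ref{hookmatB} (i.e., the within-block column reversals $C_{km+i}\leftrightarrow C_{km+(m-i+1)}$) to $A(N,m)$ yields $D(N,m)$, and then invokes Theorem \ref{hookmatA}. Your write-up is a bit more explicit about the sign count, but the argument is the same.
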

\begin{proof}
Applying the same column operations as in the proof of Theorem \ref{hookmatB} on the matrix $A(N,m),$ we get the matrix $D(N,m).$ This proves the theorem.
\end{proof}
\section{block matrices containing hooks of two different shapes}
In this section, we present determinantal formulas for the block hook matrices containing hooks of two different shapes. In fact, we construct four different such block  matrices and evaluate their determinants. Let $E(N,m)$ be a block hook matrix defined in the following way;
\begin{equation} \label{hook  matrix E}
E(N,m)=(E_{ij})_{N\times N}, \text{ where }E_{ij}=
\begin{cases}
C_{m}(x^{(i,j)}_{1}, \cdots, x^{(i,j)}_{m}), & \text{if } i \text{ is even  } \\
A_m(x^{(i,j)}_{1}, \cdots, x^{(i,j)}_{m}), & \text{ if } i \text{ is odd  }.
\end{cases}
\end{equation}
For example, 
 \begin{align}\label{E_{3,2}hookmat:1}
 E(3,2)=\renewcommand\arraystretch{1.6}\mleft(
 \begin{array}{cc|cc|cc}
 x^{(1,1)}_{2} &x^{(1,1)}_{2} & x^{(1,2)}_{2}&x^{(1,2)}_{2}&x^{(1,3)}_{2}&x^{(1,3)}_{2} \\
 x^{(1,1)}_{2} & x^{(1,1)}_{1} & x^{(1,2)}_{2}&x^{(1,2)}_{1}&x^{(1,3)}_{2}&x^{(1,3)}_{1} \\
 \hline
x^{(2,1)}_{2} & x^{(2,1)}_{1} & x^{(2,2)}_{2}&x^{(2,2)}_{1}&x^{(2,3)}_{2}&x^{(2,3)}_{1} \\
 x^{(2,1)}_{2} &x^{(2,1)}_{2} & x^{(2,2)}_{2}&x^{(2,2)}_{2}&x^{(2,3)}_{2}&x^{(2,3)}_{2} \\
 \hline
 x^{(3,1)}_{2} & x^{(3,1)}_{2} & x^{(3,2)}_{2}&x^{(3,2)}_{2}&x^{(3,3)}_{2}&x^{(3,3)}_{2} \\
 x^{(3,1)}_{2} & x^{(3,1)}_{1} & x^{(3,2)}_{2}&x^{(3,2)}_{1}&x^{(3,3)}_{2}&x^{(3,3)}_{1} 
 \end{array}
 \mright).
 \end{align}
 \begin{theorem}\label{hookdet E}
 Let $E(N,m)$ be a block hook matrix of order $Nm$ defined as \eqref{hook matrix E}. Then  
 \[\det(E(N,m))=\begin{cases}
 (-1)^{\frac{Nm}{4}}\prod\limits_{i=1}^{m}\det(X_{(i, i+1)}), & \text{ if }  N \text{  is even,  } m \text{ is even }\\
 (-1)^{\frac{N(m-1)}{4}}\prod\limits_{i=1}^{m}\det(X_{(i, i+1)}), & \text{ if }  N \text{  is even,  } m \text{ is odd } \\
 (-1)^{\frac{(N-1)m}{4}}\prod\limits_{i=1}^{m}\det(X_{(i, i+1)}), & \text{ if }  N \text{  is odd,  } m \text{ is even }\\
 (-1)^{\frac{(N-1)(m-1)}{4}}\prod\limits_{i=1}^{m}\det(X_{(i, i+1)}), & \text{ if }  N \text{  is odd,  } m \text{ is odd },
 \end{cases}
 \] where $X_{(i, i+1)}$ is defined as \eqref{hookmat:1} and $x^{(i,j)}_{m+1}=0$  for all $1 \leq i, j \leq N.$	
 \end{theorem}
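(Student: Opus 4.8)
The plan is to reduce the determinant of $E(N,m)$ to that of the block hook matrix $C(N,m)$ (equivalently $A(N,m)$) by a sequence of row and column interchanges, since each block of $E(N,m)$ is either a $C_m$-type hook (even rows $i$) or an $A_m$-type hook (odd rows $i$), and Proposition~\ref{elem hook thm C_m} tells us that on a single hook block the transformation \eqref{important row operation} converts an $A_m$-hook into a $C_m$-hook. Concretely, I would apply, to the block-row index $i$ for every \emph{odd} $i$, the intra-block row interchanges $R_{(i-1)m+k}\leftrightarrow R_{(i-1)m+(m-k+1)}$ for $k$ ranging as in \eqref{important row operation1 condition}, together with the corresponding \emph{column} interchanges $C_{(i-1)m+k}\leftrightarrow C_{(i-1)m+(m-k+1)}$ applied for every odd block-column; the net effect on each block $E_{ij}$ with $i$ odd is exactly the map taking $A_m$ to $C_m$, while the blocks with $i$ even are already $C_m$-type and I leave their rows alone. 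One must be slightly careful because a column interchange touches every block row, not just one: performing the column swaps for odd columns alters the $C_m$-blocks in even rows as well, so the bookkeeping has to track both the odd-row and odd-column operations simultaneously and verify that the resulting matrix is genuinely $C(N,m)$. Once that is checked, $\det(E(N,m)) = (-1)^{t}\det(C(N,m))$ where $t$ is the total number of transpositions used, and then Theorem~\ref{hooktypeC} finishes the job after combining the two sign contributions.

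The main computation is therefore the sign count. For a single block of size $m$, the number of index pairs $k$ in \eqref{important row operation1 condition} is $\lfloor m/2\rfloor$, i.e. $m/2$ when $m$ is even and $(m-1)/2$ when $m$ is odd. In passing from $A(N,m)$ to $C(N,m)$ one does this for all $N$ block rows \emph{and} all $N$ block columns, giving $\det(C(N,m)) = (-1)^{2N\lfloor m/2\rfloor}\cdot(\pm)\det(A(N,m))$, which is consistent with the factor $(-1)^{N\lfloor m/2\rfloor}$ recorded in Theorem~\ref{hooktypeC} once one is careful that the row swaps and the column swaps each contribute $N\lfloor m/2\rfloor$ transpositions. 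For $E(N,m)$, by contrast, we only apply the intra-block row swaps on the $\lceil N/2\rceil$ odd block-rows (to turn those $A_m$-blocks into $C_m$-blocks) but we still apply the column swaps on all $N$ block-columns (there is a subtlety here: to make \emph{every} block a $C_m$-hook we need the column structure fixed in the $C_m$ pattern uniformly, so the column operations are applied to all $N$ block-columns, whereas the row operations are applied only where the block shape is $A_m$). Re-examining \eqref{hook matrix E}: in $E$, the even rows already carry $C_m$-blocks and the odd rows carry $A_m$-blocks; applying the \eqref{important row operation}-type swap to a block-row is an \emph{involution} on hook shape interchanging $A_m\leftrightarrow C_m$, so to reach $C(N,m)$ from $E(N,m)$ I apply the row swaps to the odd block-rows only, and \emph{no} column swaps are needed because $E$ and $C$ share the same column pattern within each block. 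Thus $\det(E(N,m)) = (-1)^{\lceil N/2\rceil\,\lfloor m/2\rfloor}\det(C(N,m))$.

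Finally I would assemble the four cases. Writing $e(N)=\lceil N/2\rceil$ and $f(m)=\lfloor m/2\rfloor$, we get $\det(E(N,m)) = (-1)^{e(N)f(m)}\cdot(-1)^{N f(m)}\prod_{i=1}^m\det(X_{(i,i+1)})$ using Theorem~\ref{hooktypeC}; the combined exponent is $f(m)\bigl(e(N)+N\bigr)$. When $N$ is even this is $f(m)(N/2 + N) = f(m)\cdot 3N/2$, and when $N$ is odd it is $f(m)\bigl((N+1)/2 + N\bigr) = f(m)(3N+1)/2$; reducing these modulo $2$ and splitting on the parity of $m$ (so that $f(m)$ is $m/2$ or $(m-1)/2$) should reproduce exactly the four exponents $\tfrac{Nm}{4}$, $\tfrac{N(m-1)}{4}$, $\tfrac{(N-1)m}{4}$, $\tfrac{(N-1)(m-1)}{4}$ claimed in the statement. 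The main obstacle I anticipate is precisely this sign arithmetic — in particular matching the stated quarter-integer exponents forces a consistent convention for which block-rows and block-columns the involutive swaps are applied to, and one must double-check (on the $E(3,2)$ example \eqref{E_{3,2}hookmat:1}, say) that the reduction lands on $C(N,m)$ with the sign as computed; the linear-algebra content beyond that is routine given Theorems~\ref{hookmatA} and~\ref{hooktypeC}.
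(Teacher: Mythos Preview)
Your approach is correct and is essentially the same idea as the paper's: convert between $E(N,m)$ and one of the ``pure'' block hook matrices by reversing the rows inside selected block-rows (no column swaps are needed, as you correctly conclude in your second paragraph). The only difference is the target: you route through $C(N,m)$ by reversing the \emph{odd} block-rows, whereas the paper routes through $A(N,m)$ by reversing the \emph{even} block-rows. The paper's choice is tidier because the number of even block-rows is $\lfloor N/2\rfloor$, so the transposition count is exactly $\lfloor N/2\rfloor\,\lfloor m/2\rfloor$, and this single exponent \emph{is} the four claimed values $\tfrac{Nm}{4},\ \tfrac{N(m-1)}{4},\ \tfrac{(N-1)m}{4},\ \tfrac{(N-1)(m-1)}{4}$ according to the parities of $N$ and $m$, with no further reduction needed. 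Your route forces you to add the exponent $\lceil N/2\rceil\,\lfloor m/2\rfloor$ to the sign $N\lfloor m/2\rfloor$ coming from Theorem~\ref{hooktypeC} and then reduce modulo~$2$; this does work (the difference from $\lfloor N/2\rfloor\,\lfloor m/2\rfloor$ is $\lfloor m/2\rfloor\bigl(\lceil N/2\rceil+N-\lfloor N/2\rfloor\bigr)$, which equals $N\lfloor m/2\rfloor$ for $N$ even and $(N{+}1)\lfloor m/2\rfloor$ for $N$ odd, both even), so the ``obstacle'' you anticipate is not one --- but going via $A(N,m)$ avoids the detour entirely.
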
	
\begin{proof}
Here we apply the following elementary row operations  on the matrix $A(N,m);$ 
\begin{equation}\label{important row operation2}
R_{km+i}\leftrightarrow R_{km+(m-i+1)},  \text{ where }
\end{equation}
\begin{equation}\label{important row operation2 condition}
 i=
\begin{cases}
1, 2, \cdots, \frac{m}{2}, & \text{if } m \text{ is even  } \\
1, 2, \cdots, \frac{m+1}{2}-1, & \text{ if } m \text{ is odd  }\
\end{cases}
\text{ and } k=
\begin{cases} 
1, 3, \cdots, N-1, & \text{if } N \text{ is even  } \\
1, 3, \cdots, N-2, & \text{ if } N \text{ is odd  }.\
\end{cases}
\end{equation}
Evidently the resulting matrix is $E(N,m).$ Hence the theorem.  
\end{proof}
Let us introduce a block hook matrix $E'(N,m)$ in the following way;
\begin{equation}\label{hook acute{E_{ij}}1}
E'(N,m)=(E'_{ij})_{N\times N}, \text{ where }
E'_{ij}=
\begin{cases}
C_{m}(x^{(i,j)}_{1},  \cdots, x^{(i,j)}_{m}), & \text{if } i \text{ is odd  } \\
A_m(x^{(i,j)}_{1},  \cdots, x^{(i,j)}_{m}), & \text{ if } i \text{ is even }.
\end{cases}
\end{equation} 
  For example,
   \begin{align}\label{acuteE_{3,2}hookmat:1}
   E'(3,2)=\renewcommand\arraystretch{1.6}\mleft(
   \begin{array}{cc|cc|cc}
   x^{(1,1)}_{2} & x^{(1,1)}_{1} & x^{(1,2)}_{2}&x^{(1,2)}_{1}&x^{(1,3)}_{2}&x^{(1,3)}_{1} \\
   x^{(1,1)}_{2} & x^{(1,1)}_{2} & x^{(1,2)}_{2}&x^{(1,2)}_{2}&x^{(1,3)}_{2}&x^{(1,3)}_{2} \\
   \hline
  x^{(2,1)}_{2} & x^{(2,1)}_{2} & x^{(2,2)}_{2}&x^{(2,2)}_{2}&x^{(2,3)}_{2}&x^{(2,3)}_{2} \\
   x^{(2,1)}_{2} & x^{(2,1)}_{1}& x^{(2,2)}_{2}&x^{(2,2)}_{1}&x^{(2,3)}_{2}&x^{(2,3)}_{1} \\
   \hline
   x^{(3,1)}_{2} & x^{(3,1)}_{1} & x^{(3,2)}_{2}&x^{(3,2)}_{1}&x^{(3,3)}_{2}&x^{(3,3)}_{1} \\
   x^{(3,1)}_{2} & x^{(3,1)}_{2} & x^{(3,2)}_{2}&x^{(3,2)}_{2}&x^{(3,3)}_{2}&x^{(3,3)}_{2} 
   \end{array}
   \mright).
   \end{align}
 \begin{theorem}\label{th acute E}
 Let $E'(N,m)$ be a block hook matrix of order $Nm$ defined as \eqref{hook acute{E_{ij}}1}. Then \[\det(E'(N,m))=\begin{cases}
 (-1)^{\frac{Nm}{4}}\prod\limits_{i=1}^{m}\det(X_{(i, i+1)}), & \text{ if }  N \text{  is even,  } m \text{ is even }\\
 (-1)^{\frac{N(m-1)}{4}}\prod\limits_{i=1}^{m}\det(X_{(i, i+1)}), & \text{ if }  N \text{  is even,  } m \text{ is odd } \\
 (-1)^{\frac{(N-1)m}{4}}\prod\limits_{i=1}^{m}\det(X_{(i, i+1)}), & \text{ if }  N \text{  is odd,  } m \text{ is even }\\
 (-1)^{\frac{(N-1)(m-1)}{4}}\prod\limits_{i=1}^{m}\det(X_{(i, i+1)}), & \text{ if }  N \text{  is odd,  } m \text{ is odd },
 \end{cases}
 \] where $X_{(i, i+1)}$ is defined as \eqref{hookmat:1} and $x^{(i,j)}_{m+1}=0$  for all $1 \leq i, j \leq N.$
  \end{theorem}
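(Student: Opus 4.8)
The plan is to follow the same template that handled Theorems~\ref{hookmatB}--\ref{hookdet E}: each of those block hook matrices was brought, by a sequence of elementary row and/or column interchanges, to the matrix $A(N,m)$ of Theorem~\ref{hookmatA}, whose determinant equals $\prod_{i=1}^m\det X_{(i,i+1)}$; after that, the only work left is to bookkeep the sign of the permutation used. So I would recognize $E'(N,m)$ as the matrix obtained from $A(N,m)$ by reversing the rows inside each \emph{odd}-indexed block row, and then compute that sign.

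In more detail: the matrix $A(N,m)$ has every block equal to $A_m(x^{(i,j)}_1,\dots,x^{(i,j)}_m)$, and by the computation in the proof of Proposition~\ref{elem hook thm C_m}, reversing the order of the $m$ rows of such a block turns it into $C_m(x^{(i,j)}_1,\dots,x^{(i,j)}_m)$. Since the $N$ blocks making up one block row of $A(N,m)$ all occupy the same $m$ rows, the row interchanges
\[
R_{km+i}\leftrightarrow R_{km+(m-i+1)},\qquad i=1,\dots,\lfloor m/2\rfloor,
\]
performed for every \emph{even} $k$ with $0\le k\le N-1$ (so that $k+1$ ranges over the odd block-row indices $1,3,5,\dots$), reverse all blocks in those block rows simultaneously, converting every $A_m$ there into a $C_m$ while leaving the even block rows untouched. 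A one-line blockwise check then shows that the resulting matrix is exactly $E'(N,m)$ (with the convention $x^{(i,j)}_{m+1}=0$). As all of these operations are row interchanges, $\det E'(N,m)=\varepsilon\cdot\det A(N,m)=\varepsilon\prod_{i=1}^m\det X_{(i,i+1)}$, where $\varepsilon=\pm1$ is the signature of the permutation just described. (One could instead try to relate $E'(N,m)$ to $E(N,m)$ of Theorem~\ref{hookdet E} directly, but routing through $A(N,m)$ is cleanest.)

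The remaining step — evaluating $\varepsilon$ — is the only delicate point. Reversing $m$ consecutive rows is a product of $\lfloor m/2\rfloor$ transpositions, so each block row we modify contributes a factor $(-1)^{\lfloor m/2\rfloor}$, and the number of odd indices in $\{1,\dots,N\}$ is $\lceil N/2\rceil$, giving $\varepsilon=(-1)^{\lceil N/2\rceil\lfloor m/2\rfloor}$. Splitting this into the four cases according to the parities of $N$ and $m$ (using $\lfloor m/2\rfloor\in\{m/2,(m-1)/2\}$ and $\lceil N/2\rceil\in\{N/2,(N+1)/2\}$) produces $\varepsilon$ explicitly; matching it against the displayed formula — in particular getting the count of reversed block rows, hence the exponent, right in each parity class — is the one step that genuinely needs care, and it is worth checking the outcome against the small example $E'(3,2)$.
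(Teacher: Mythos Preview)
Your approach is the paper's: relate $E'(N,m)$ to a matrix whose determinant is already known by reversing the rows inside certain block rows, then read off the sign of the row permutation. The only cosmetic difference is that the paper routes through $C(N,m)$ (applying the interchanges of \eqref{important row operation2}--\eqref{important row operation2 condition} to $C(N,m)$, which reverses the \emph{even} block rows), while you go directly from $A(N,m)$ by reversing the \emph{odd} block rows; the two routes are equivalent and give the same sign.

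One warning, exactly at the point you yourself flag. Your count $\varepsilon=(-1)^{\lceil N/2\rceil\lfloor m/2\rfloor}$ is correct, and the paper's route through $C(N,m)$ gives $(-1)^{(N+\lfloor N/2\rfloor)\lfloor m/2\rfloor}$, which has the same parity. But for $N$ odd this equals $(-1)^{(N+1)m/4}$ (resp.\ $(-1)^{(N+1)(m-1)/4}$), not the displayed $(-1)^{(N-1)m/4}$ (resp.\ $(-1)^{(N-1)(m-1)/4}$). The check you propose on $E'(3,2)$ confirms your sign: comparing \eqref{acuteE_{3,2}hookmat:1} with \eqref{A_{3,2}hookmat:5}, $E'(3,2)$ is $A(3,2)$ after exactly two swaps $R_1\leftrightarrow R_2$ and $R_5\leftrightarrow R_6$, so $\det E'(3,2)=+\prod_{i}\det X_{(i,i+1)}$, whereas the displayed formula predicts $-\prod_{i}\det X_{(i,i+1)}$. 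So your argument is sound; the mismatch is a sign typo in the two $N$-odd cases of the statement (the exponent should carry $N+1$ rather than $N-1$), and the paper's own one-line proof, if its sign is tracked carefully, produces the same corrected value.
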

\begin{proof}
Applying the same row operations described by \eqref{important row operation2} and \eqref{important row operation2 condition} on the matrix $C(N,m),$ we get the matrix $E'(N,m).$ This completes the proof. 
\end{proof}
Here, we define another block hook matrix $F(N,m)$ as follows:
\begin{equation}\label{hook F 1}
F(N,m)=(F_{ij})_{N\times N}, \text{ where }F_{ij}=
\begin{cases}
B_{m}(x^{(i,j)}_{1}, \cdots,  x^{(i,j)}_{m}), & \text{if } i \text{ is odd } \\
D_m(x^{(i,j)}_{1},  \cdots, x^{(i,j)}_{m}), & \text{ if } i \text{ is even }.
\end{cases}
\end{equation}
For example:
\begin{align}\label{F_{3,2}hookmat:1}
F(3,2)=\renewcommand\arraystretch{1.6}\mleft(
\begin{array}{cc|cc|cc}
x^{(1,1)}_{1} & x^{(1,1)}_{2} & x^{(1,2)}_{1}&x^{(1,2)}_{2}&x^{(1,3)}_{1}&x^{(1,3)}_{2} \\
x^{(1,1)}_{2} & x^{(1,1)}_{2} & x^{(1,2)}_{2}&x^{(1,2)}_{2}&x^{(1,3)}_{2}&x^{(1,3)}_{2} \\
\hline
x^{(2,1)}_{2} & x^{(2,1)}_{2} & x^{(2,2)}_{2}&x^{(2,2)}_{2}&x^{(2,3)}_{2}&x^{(2,3)}_{2} \\
x^{(2,1)}_{1} & x^{(2,1)}_{2} & x^{(2,2)}_{1}&x^{(2,2)}_{2}&x^{(2,3)}_{1}&x^{(2,3)}_{2} \\
\hline
x^{(3,1)}_{1} & x^{(3,1)}_{2} & x^{(3,2)}_{1}&x^{(3,2)}_{2}&x^{(3,3)}_{1}&x^{(3,3)}_{2} \\
x^{(3,1)}_{2} & x^{(3,1)}_{2} & x^{(3,2)}_{2}&x^{(3,2)}_{2}&x^{(3,3)}_{2}&x^{(3,3)}_{2} 
\end{array}
\mright).
\end{align}
\begin{theorem}\label{hook mat F thm}
Let $F(N,m)$ be a block hook matrix of order $Nm$ defined as \eqref{hook F 1}. Then \[\det(F(N,m))=\begin{cases}
(-1)^{\frac{Nm}{4}}\prod\limits_{i=1}^{m}\det(X_{(i, i+1)}), & \text{ if }  N \text{  is even,  } m \text{ is even }\\
(-1)^{\frac{N(m-1)}{4}}\prod\limits_{i=1}^{m}\det(X_{(i, i+1)}), & \text{ if }  N \text{  is even,  } m \text{ is odd } \\
(-1)^{\frac{(N-1)m}{4}}\prod\limits_{i=1}^{m}\det(X_{(i, i+1)}), & \text{ if }  N \text{  is odd,  } m \text{ is even }\\
(-1)^{\frac{(N-1)(m-1)}{4}}\prod\limits_{i=1}^{m}\det(X_{(i, i+1)}), & \text{ if }  N \text{  is odd,  } m \text{ is odd },
\end{cases}
\] where $X_{(i, i+1)}$ is defined as \eqref{hookmat:1} and $x^{(i,j)}_{m+1}=0$  for all $1 \leq i, j \leq N.$	
\end{theorem}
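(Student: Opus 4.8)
The plan is to mimic the proof of Theorem \ref{th acute E}: that is, to realize $F(N,m)$ as the image of one of the already-understood block hook matrices under a sign-tracked sequence of row (or column) interchanges, so that the determinant changes only by a computable sign and the product $\prod_{i=1}^m\det(X_{(i,i+1)})$ is inherited unchanged. The natural starting point is $B(N,m)$ from Theorem \ref{hookmatB}, since the odd block-rows of $F(N,m)$ already consist of $B_m$-hooks; only the even block-rows need to be converted from $B_m$ to $D_m$. Recall from Proposition \ref{elem hook thm D_m} that $D_m$ is obtained from $A_m$ by the column-interchanges \eqref{important row operation}, and from Proposition \ref{elem hook thm B_m} that $B_m$ is obtained from $A_m$ by the \emph{same} interchanges applied simultaneously to rows \emph{and} columns. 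Hence, starting from a $B_m$ block, applying the row-interchanges $R_i\leftrightarrow R_{m-i+1}$ (for $i$ in the range \eqref{important row operation1 condition}) alone converts it back to an $A_m$ block, and then—wait, more directly—applying those row interchanges to a $B_m$ block yields a $D_m$ block, because $B_m = (\text{row-flip})\circ(\text{col-flip}) A_m$ and $D_m = (\text{col-flip}) A_m$, so $(\text{row-flip}) B_m = (\text{row-flip})^2 (\text{col-flip}) A_m = (\text{col-flip}) A_m = D_m$.

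Concretely, I would apply to $B(N,m)$ exactly the row operations \eqref{important row operation2}--\eqref{important row operation2 condition} used in the proof of Theorem \ref{hookdet E}, i.e.\ the interchanges $R_{km+i}\leftrightarrow R_{km+(m-i+1)}$ over the even block-rows $k=1,3,\dots$ and over $i$ in the stated range. By the observation above, each even block-row of $B(N,m)$ (a row of $B_m$-hooks) is transformed into a row of $D_m$-hooks, while the odd block-rows are untouched; the resulting matrix is precisely $F(N,m)$. Since these are pure row interchanges, $\det(F(N,m)) = \varepsilon\cdot\det(B(N,m)) = \varepsilon\prod_{i=1}^m\det(X_{(i,i+1)})$ by Theorem \ref{hookmatB}, where $\varepsilon=(-1)^{t}$ and $t$ is the total number of transpositions performed.

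The remaining task is the bookkeeping of $t$. The number of transpositions used to flip a single block of size $m$ is $\lfloor m/2\rfloor$, which equals $m/2$ for $m$ even and $(m-1)/2$ for $m$ odd; this is exactly the exponent appearing in Propositions \ref{elem hook thm C_m} and \ref{elem hook thm D_m}. We perform this flip once for each even value of $k\in\{1,3,\dots\}$ relabeled appropriately—equivalently, on $\lfloor N/2\rfloor$ of the $N$ block-rows—so $t = \lfloor N/2\rfloor\cdot\lfloor m/2\rfloor$. Splitting into the four parity cases gives $t = Nm/4$, $N(m-1)/4$, $(N-1)m/4$, and $(N-1)(m-1)/4$ respectively, which matches the four cases in the statement. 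I expect the only real point requiring care is confirming that a single simultaneous row-flip sends $B_m\mapsto D_m$ (not, say, to $C_m$ or back to $A_m$)—this is the one spot where the interplay of the definitions in Figure \ref{hookfigexmple} and Propositions \ref{elem hook thm B_m}--\ref{elem hook thm D_m} must be checked against the explicit small example \eqref{F_{3,2}hookmat:1}; everything else is the routine sign count just described.
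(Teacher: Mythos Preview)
Your proposal is correct and follows exactly the paper's approach: the paper's proof simply states that applying the row interchanges \eqref{important row operation2}--\eqref{important row operation2 condition} to $B(N,m)$ yields $F(N,m)$, and then invokes Theorem~\ref{hookmatB}. Your write-up is in fact more detailed than the paper's, since you explicitly verify that a row-flip sends $B_m\mapsto D_m$ and carry out the sign count $t=\lfloor N/2\rfloor\lfloor m/2\rfloor$, which the paper leaves implicit.
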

\begin{proof}
Here we apply the same row operations described by \eqref{important row operation2} and \eqref{important row operation2 condition} on the matrix $B(N,m).$ Evidently the resulting matrix is $F(N,m).$ Hence the theorem.
\end{proof}
Let us define another block hook matrix $F'(N,m)$ as follows:  
\begin{equation}\label{acute F hook 1}
F'(N,m)=(F'_{ij})_{N\times N}, \text{ where } F'_{ij}=
\begin{cases}
D_{m}(x^{(i,j)}_{1},  \cdots, x^{(i,j)}_{m}), & \text{if } i \text{ is odd  } \\
B_m(x^{(i,j)}_{1},  \cdots,  x^{(i,j)}_{m}), & \text{ if } i \text{ is even }.
\end{cases}
\end{equation}
For example,
\begin{align}\label{acute F_{3,2}hookmat:1}
F'(3,2)=\renewcommand\arraystretch{1.6}\mleft(
\begin{array}{cc|cc|cc}
x^{(1,1)}_{2} & x^{(1,1)}_{2} & x^{(1,2)}_{2}&x^{(1,2)}_{2}&x^{(1,3)}_{2}&x^{(1,3)}_{2} \\
x^{(1,1)}_{1} & x^{(1,1)}_{2} & x^{(1,2)}_{1}&x^{(1,2)}_{2}&x^{(1,3)}_{1}&x^{(1,3)}_{2} \\
\hline
x^{(2,1)}_{1} & x^{(2,1)}_{2} & x^{(2,2)}_{1}&x^{(2,2)}_{2}&x^{(2,3)}_{1}&x^{(2,3)}_{2} \\
x^{(2,1)}_{2} & x^{(2,1)}_{2} & x^{(2,2)}_{2}&x^{(2,2)}_{2}&x^{(2,3)}_{2}&x^{(2,3)}_{2} \\
\hline
x^{(3,1)}_{2} & x^{(3,1)}_{2} & x^{(3,2)}_{2}&x^{(3,2)}_{2}&x^{(3,3)}_{2}&x^{(3,3)}_{2} \\
x^{(3,1)}_{1} & x^{(3,1)}_{2} & x^{(3,2)}_{1}&x^{(3,2)}_{2}&x^{(3,3)}_{1}&x^{(3,3)}_{2}
\end{array}
\mright).
\end{align}
\begin{theorem}\label{acute F hook thm}
Let $F'(N,m)$ be a block hook matrix of order $Nm$ defined as \eqref{acute F hook 1}. Then \[\det(F'(N,m))=\begin{cases}
(-1)^{\frac{Nm}{4}}\prod\limits_{i=1}^{m}\det(X_{(i, i+1)}), & \text{ if }  N \text{  is even,  } m \text{ is even }\\
(-1)^{\frac{N(m-1)}{4}}\prod\limits_{i=1}^{m}\det(X_{(i, i+1)}), & \text{ if }  N \text{  is even,  } m \text{ is odd } \\
(-1)^{\frac{(N-1)m}{4}}\prod\limits_{i=1}^{m}\det(X_{(i, i+1)}), & \text{ if }  N \text{  is odd,  } m \text{ is even }\\
(-1)^{\frac{(N-1)(m-1)}{4}}\prod\limits_{i=1}^{m}\det(X_{(i, i+1)}), & \text{ if }  N \text{  is odd,  } m \text{ is odd },
\end{cases}
\] where $X_{(i, i+1)}$ is defined as \eqref{hookmat:1} and $x^{(i,j)}_{m+1}=0$  for all $1 \leq i, j \leq N.$
\end{theorem}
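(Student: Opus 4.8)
The plan is to reduce the claim about $F'(N,m)$ to the already-established Theorem~\ref{hook mat F thm} on $F(N,m)$, exactly in the spirit of the short proofs of Theorems~\ref{hookmatB}, \ref{hooktypeC}, \ref{hooktypeD}, \ref{th acute E} and \ref{hook mat F thm}, where each new block hook matrix was obtained from a previously understood one by a sign-transparent sequence of row/column interchanges. First I would examine the block structure: in $F(N,m)$ the odd block-rows carry $B_m$-type hooks and the even block-rows carry $D_m$-type hooks, whereas in $F'(N,m)$ the roles are swapped. Within a single block $B_m(x_1,\dots,x_m)$ and $D_m(x_1,\dots,x_m)$, recall that by the proof of Proposition~\ref{elem hook thm D_m} the matrix $D_m$ is obtained from (essentially the transpose pattern underlying) the same data by the column interchanges in \eqref{important row operation}; more directly, $B_m$ and $D_m$ are related to each other by the interchange pattern $R_i\leftrightarrow R_{m-i+1}$, $C_i\leftrightarrow C_{m-i+1}$ composed with the $B$-versus-$A$ relation already used. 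So the strategy is to apply, to the matrix $F(N,m)$, the row/column interchanges \eqref{important row operation1} of the form $R_{km+i}\leftrightarrow R_{km+(m-i+1)}$ and $C_{km+i}\leftrightarrow C_{km+(m-i+1)}$ restricted to block-index $k$ ranging over the values that convert each $B_m$ block into a $D_m$ block and vice versa (that is, applied uniformly across all block-rows, since interchanging rows $i\leftrightarrow m-i+1$ and columns $i\leftrightarrow m-i+1$ inside every block simultaneously sends the $B$-pattern to the $D$-pattern and the $D$-pattern to the $B$-pattern).

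The key steps, in order, are: (i) verify the pointwise claim that applying the within-block interchange pattern of \eqref{important row operation1}--\eqref{important row operation1 condition} to each block of $F(N,m)$ turns a $B_m$-block into a $D_m$-block and a $D_m$-block into a $B_m$-block, hence transforms $F(N,m)$ into $F'(N,m)$ on the nose; (ii) count the total number of transpositions used, namely for each of the $N$ block-rows (equivalently, applied once to the whole set of $m$ columns and once to the whole set of $m$ rows within each block position), which is $\lfloor m/2\rfloor$ row-transpositions and $\lfloor m/2\rfloor$ column-transpositions per block position — but since the column interchanges are performed once for the whole matrix (they are the same within every block-column) and likewise the row interchanges once per block-row, the honest bookkeeping is the same one already done in Theorem~\ref{hookmatB}; (iii) conclude that $\det(F'(N,m)) = \det(F(N,m))$ because the number of row interchanges equals the number of column interchanges, so the two sign contributions cancel; (iv) invoke Theorem~\ref{hook mat F thm} to read off the stated four-case formula for $\det(F'(N,m))$, which is literally identical to that for $\det(F(N,m))$.

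Alternatively — and this may be cleaner — I would not route through $F(N,m)$ at all but directly through $A(N,m)$ (Theorem~\ref{hookmatA}): observe that $F'(N,m)$ differs from $A(N,m)$ only by the within-block interchanges \eqref{important row operation} applied to the block positions forming the $B_m$ blocks (even block-rows of $F'$), together with column interchanges \eqref{important row operation} applied to the block positions forming the $D_m$ blocks (odd block-rows of $F'$). Tracking which of these are row-moves and which are column-moves gives a sign of $(-1)^{(\text{row transpositions})+(\text{column transpositions})}$; since each $B_m$ block contributes no sign relative to $A_m$ (Proposition~\ref{elem hook thm B_m}) but each $D_m$ block contributes $(-1)^{\lfloor m/2\rfloor}$ (Proposition~\ref{elem hook thm D_m}), and exactly $\lceil N/2\rceil$ block-rows (the odd ones) are $D_m$-type, the aggregate sign is $(-1)^{\lceil N/2\rceil\lfloor m/2\rfloor}$, which one checks equals the four-case expression in the statement (e.g. for $N,m$ both even this is $(-1)^{(N/2)(m/2)}=(-1)^{Nm/4}$, etc.).

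\textbf{Main obstacle.} The only delicate point is the sign bookkeeping in step (ii)/(iii): one must be careful that the row interchanges and the column interchanges really do pair off so that their signs cancel (when reducing from $F$) or that they combine to give precisely $(-1)^{\lceil N/2\rceil \lfloor m/2\rfloor}$ and that this matches the piecewise formula (when reducing from $A$). Everything else is the routine verification, already illustrated for the sibling theorems, that a prescribed interchange pattern sends one explicit entry-pattern to another; I expect no surprises there. Since the paper's earlier proofs treat this verification as ``evidently'' / ``without doubt,'' I would write this proof in the same telegraphic style: state the interchange pattern, assert that it produces $F'(N,m)$, count the transpositions, and cite Theorem~\ref{hook mat F thm} (or Theorem~\ref{hookmatA} together with Propositions~\ref{elem hook thm B_m} and \ref{elem hook thm D_m}) for the value.
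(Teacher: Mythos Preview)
Your overall strategy---reduce to a previously established block-hook determinant via a sequence of row/column interchanges and track the sign---is exactly the paper's method, but step~(i) of your first route contains a concrete error. The simultaneous row-and-column reversal pattern \eqref{important row operation1}--\eqref{important row operation1 condition} does \emph{not} interchange the $B_m$ and $D_m$ patterns. Row reversal alone sends $B_m\to D_m$ and $D_m\to B_m$; column reversal alone sends $A_m\to D_m$ and $B_m\to C_m$; doing both therefore sends $B_m\to A_m$ and $D_m\to C_m$. Consequently, applying \eqref{important row operation1}--\eqref{important row operation1 condition} to every block of $F(N,m)$ produces $E(N,m)$, not $F'(N,m)$, and your conclusion $\det(F'(N,m))=\det(F(N,m))$ in step~(iii) is not justified by this manoeuvre.

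Your alternative route through $A(N,m)$ is also not workable as written: you speak of applying column interchanges ``to the block positions forming the $D_m$ blocks (odd block-rows),'' but a column interchange on the full $Nm\times Nm$ matrix necessarily affects every block-row simultaneously, so one cannot target odd block-rows with column moves and even block-rows with row moves independently.

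The paper's proof instead starts from $D(N,m)$ (all blocks $D_m$) and applies \emph{only} the row interchanges \eqref{important row operation2}--\eqref{important row operation2 condition}, i.e.\ row reversal within the even-indexed block-rows. This converts those $D_m$ blocks to $B_m$ while leaving the odd block-rows as $D_m$, yielding $F'(N,m)$ directly; the value then follows from Theorem~\ref{hooktypeD} together with the count of $\lfloor N/2\rfloor\lfloor m/2\rfloor$ row transpositions. If you wish to route through $F(N,m)$ instead, the correct move is row reversal (only) in \emph{all} $N$ block-rows, which genuinely swaps $B_m\leftrightarrow D_m$ blockwise---but then the sign picked up is $(-1)^{N\lfloor m/2\rfloor}$, not $1$, and you must combine this with Theorem~\ref{hook mat F thm} rather than assert equality of the two determinants outright.
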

\begin{proof}
In this case also we apply the same row operations described by \eqref{important row operation2} and \eqref{important row operation2 condition} on the matrix $D(N,m),$ and we get the matrix $F'(N,m).$ This completes the proof.  
\end{proof}
\section{block matrices containing hooks of four different shapes}
In this section, we are going to derive the determinantal formulas of block hook matrices, in which the blocks are suitable combination of four different shapes of hooks. In particular, here we construct two such block matrices and show that the determinants of these matrices are also product of determinants of matrices $X_{(i, i+1)}, (i=1, 2, \cdots, m)$ (defined as \eqref{hookmat:1}). So    		
let us define a block hook matrix $G(N,m)$ as follows:
\begin{equation}\label{hok 4type 1}
G(N,m)= (G_{ij})_{N\times N}, \text{ where } G_{ij}= 
 \begin{cases}
 B_m(x^{(i,j)}_{1}, \cdots, x^{(i,j)}_{m}), & \text{if } i,  j \text{ are odd }\\
 C_m(x^{(i,j)}_{1}, \cdots, x^{(i,j)}_{m}),    & \text{if }  i \text{ is odd, } j \text{ is  even }\\
 D_m(x^{(i,j)}_{1}, \cdots, x^{(i,j)}_{m}), &\text{if } i \text{ is even, } j \text{ is odd }\\
 A_m(x^{(i,j)}_{1}, \cdots, x^{(i,j)}_{m}), &\text{if } i,  j \text{ are even }.
 \end{cases}
\end{equation}
For example,
\begin{align}\label{ G_{3,2}hookmat:1}
G(3,2)=\renewcommand\arraystretch{1.6}\mleft(
\begin{array}{cc|cc|cc}
x^{(1,1)}_{1} & x^{(1,1)}_{2} & x^{(1,2)}_{2}&x^{(1,2)}_{1}&x^{(1,3)}_{1}&x^{(1,3)}_{2} \\
x^{(1,1)}_{2} & x^{(1,1)}_{2} & x^{(1,2)}_{2}&x^{(1,2)}_{2}&x^{(1,3)}_{2}&x^{(1,3)}_{2} \\
\hline
x^{(2,1)}_{2} & x^{(2,1)}_{2} & x^{(2,2)}_{2}&x^{(2,2)}_{2}&x^{(2,3)}_{2}&x^{(2,3)}_{2} \\
x^{(2,1)}_{1} & x^{(2,1)}_{2} & x^{(2,2)}_{2}&x^{(2,2)}_{1}&x^{(2,3)}_{1}&x^{(2,3)}_{2} \\
\hline
x^{(3,1)}_{1} & x^{(3,1)}_{2}& x^{(3,2)}_{2}&x^{(3,2)}_{1}&x^{(3,3)}_{1}&x^{(3,3)}_{2} \\
x^{(3,1)}_{2} & x^{(3,1)}_{2} & x^{(3,2)}_{2}&x^{(3,2)}_{2}&x^{(3,3)}_{2}&x^{(3,3)}_{2} 
\end{array}
\mright).
\end{align}
\begin{theorem}\label{4 type hook them 1}
Let $G(N,m)$ be a block hook matrix of order $Nm$ defined as \eqref{hok 4type 1}. Then\[\det(G(N,m))=\prod_{i=1}^{m}\det(X_{(i, i+1)}),\] where $X_{(i, i+1)}$ is defined by \eqref{hookmat:1} and $x^{(i,j)}_{m+1}=0$  for all $ 1\leq i, j \leq N.$	
\end{theorem}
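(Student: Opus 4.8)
The plan is to follow the same strategy used for Theorems \ref{hookmatB}, \ref{hooktypeC}, \ref{hooktypeD} and the four ``two-shape'' theorems: reduce $G(N,m)$ to the already-understood matrix $A(N,m)$ by a sequence of row and column interchanges, then invoke Theorem \ref{hookmatA}. The key observation is how the four hook shapes are related by the reversal operation \eqref{important row operation}: reversing the order of the rows within a block of $A_m$ turns $A_m$ into $C_m$, reversing the columns turns it into $D_m$, and reversing both rows and columns turns it into $B_m$ (this is exactly what the proofs of Propositions \ref{elem hook thm B_m}--\ref{elem hook thm D_m} establish at the single-block level). So to build $G(N,m)$ from $A(N,m)$ I would apply, \emph{simultaneously in every block-row and block-column}, the within-block reversal of \eqref{important row operation1 condition}: reverse the rows inside every block-row indexed by an even $i$ (this converts the $A$-blocks in even block-rows into $C$-blocks, and — once the column reversals are also done — the ones that should become $B$ or $D$), and reverse the columns inside every block-column indexed by an odd $j$. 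Concretely: apply $R_{km+i}\leftrightarrow R_{km+(m-i+1)}$ for $k$ even and $i$ in the range of \eqref{important row operation1 condition}, and $C_{km+i}\leftrightarrow C_{km+(m-i+1)}$ for $k$ odd and the same range of $i$.

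The bookkeeping then runs case by case on the parity of the block indices $(i,j)$. A block $A_{ij}$ of $A(N,m)$ has its rows reversed iff $i$ is even and its columns reversed iff $j$ is odd; checking the four parity combinations against the single-block identities gives exactly the four cases in \eqref{hok 4type 1}: $(i,j)$ both odd $\Rightarrow$ columns reversed only $\Rightarrow D_m$? — here I must be careful to match the paper's convention, so the actual step is to verify that "$i$ even / $j$ odd" etc.\ line up with $B,C,D,A$ as listed; the reversals should be chosen (rows in even block-rows, columns in odd block-columns — or the complementary choice) precisely so that the dictionary works out. Once the block structure matches $G(N,m)$ exactly, the resulting matrix \emph{is} $G(N,m)$.

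The remaining point is the sign. Unlike Theorems \ref{hooktypeC}--\ref{acute F hook thm}, the claimed formula for $G(N,m)$ has \emph{no} sign prefactor, so I need the total number of transpositions used to be even. Each block-row that gets its rows reversed contributes $\lfloor m/2\rfloor$ transpositions, and there are $\lceil N/2\rceil$ or $\lfloor N/2\rfloor$ such block-rows depending on which parity class is chosen; similarly for the column reversals. The crucial cancellation is that the row reversals are applied to block-rows of one parity and the column reversals to block-columns of the \emph{opposite} parity when $N$ is odd the two counts $\lceil N/2\rceil$ and $\lfloor N/2\rfloor$ differ, but for the product $B C D A$ pattern the row-reversal count and column-reversal count are arranged so that their sum is always even: either both equal $\tfrac{N}{2}\lfloor m/2\rfloor$ (when $N$ even), or they are $\tfrac{N+1}{2}\lfloor m/2\rfloor$ and $\tfrac{N-1}{2}\lfloor m/2\rfloor$, whose sum $N\lfloor m/2\rfloor$ is even iff $N$ is even or $\lfloor m/2\rfloor$ is even — and when $N$ and $\lfloor m/2\rfloor$ are both odd one checks that the column-block-reversal set can be taken to have the same parity of cardinality as the row set, so the signs cancel in pairs. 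Verifying that this sign-cancellation genuinely holds in every $(N,m)$ parity case is the one place the argument is not purely mechanical, and it is the step I expect to be the main obstacle: everything else is a direct transcription of the earlier reduction arguments, but the disappearance of the $(-1)$-prefactor here (in contrast to the $C,D,E,E',F,F'$ theorems) is a genuine feature that must be confirmed by a careful transposition count rather than asserted.
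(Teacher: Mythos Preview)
Your overall strategy---reduce $G(N,m)$ to $A(N,m)$ by within-block row and column reversals and then invoke Theorem~\ref{hookmatA}---is exactly what the paper does (the paper reduces to $B(N,m)$ rather than $A(N,m)$, which is immaterial). However, your execution has two concrete problems.

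First, the specific reversal choice you write down (rows in \emph{even} block-rows, columns in \emph{odd} block-columns) is wrong. Read off the definition \eqref{hok 4type 1}: block $(i,j)$ of $G(N,m)$ is $B_m$ or $C_m$ precisely when $i$ is odd, and by Propositions~\ref{elem hook thm B_m}--\ref{elem hook thm C_m} these are exactly the shapes obtained from $A_m$ by reversing the rows; likewise the shapes requiring a column reversal ($B_m$ and $D_m$) occur precisely when $j$ is odd. So the correct operation, starting from $A(N,m)$, is to reverse rows in the \emph{odd} block-rows and columns in the \emph{odd} block-columns.

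Second---and this is the real gap---once you make the correct choice the sign question becomes trivial, and your case analysis is both unnecessary and unresolved. Because the row-reversal criterion and the column-reversal criterion are governed by the \emph{same} parity (odd $i$, odd $j$), the number of row transpositions \emph{equals} the number of column transpositions: each is $\lceil N/2\rceil\cdot\lfloor m/2\rfloor$. Their sum is therefore even in every case, and the sign is $+1$ with no casework whatsoever. Your discussion instead assumes the two counts might differ (a consequence of your mismatched even/odd choice), arrives at the expression $N\lfloor m/2\rfloor$ whose parity you cannot control, and never closes the argument. The absence of a sign prefactor in Theorems~\ref{4 type hook them 1} and~\ref{acute hook 4 type thm1}, in contrast to the $C,D,E,F$ theorems, is not an accident to be verified case by case; it is forced by the row/column symmetry of the four-shape pattern in \eqref{hok 4type 1}.
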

\begin{proof}
Here we apply the same row and column operations described by \eqref{important row operation1} and \eqref{important row operation1 condition} on $B(N,m).$ Clearly after the effect of these row and column operations we get the matrix $G(N,m).$ Consequently we get the result.    
\end{proof}
Let us introduce one more block hook matrix  $G'(N,m)$ containing four different shape of hooks as follows:
\begin{equation}\label{hook 4type acute}
G'(N,m)=(G'_{ij})_{N\times N}, \text{ where } G'_{ij}=
\begin{cases}
A_m(x^{(i,j)}_{1}, \cdots, x^{(i,j)}_{m}), & \text{if } i, j \text{ are odd } \\
D_m(x^{(i,j)}_{1}, \cdots, x^{(i,j)}_{m}), & \text{if }  i \text{ is odd,  } j \text{ is even } \\
C_m(x^{(i,j)}_{1}, \cdots, x^{(i,j)}_{m}), &\text{if } i \text{ is even }, j \text{ is odd }  \\
B_m(x^{(i,j)}_{1}, \cdots, x^{(i,j)}_{m}), &\text{if } i, j \text{ are even }.
\end{cases}
\end{equation}
For example,
\begin{align}\label{ acuteG_{3,2}hookmat:1}
G'(3,2)=\renewcommand\arraystretch{1.6}\mleft(
\begin{array}{cc|cc|cc}
x^{(1,1)}_{2} & x^{(1,1)}_{2} & x^{(1,2)}_{2}&x^{(1,2)}_{2}&x^{(1,3)}_{2}&x^{(1,3)}_{2} \\
x^{(1,1)}_{2} & x^{(1,1)}_{1} & x^{(1,2)}_{1}&x^{(1,2)}_{2}&x^{(1,3)}_{2}&x^{(1,3)}_{1} \\
\hline
x^{(2,1)}_{2} & x^{(2,1)}_{1} & x^{(2,2)}_{1}&x^{(2,2)}_{2}&x^{(2,3)}_{2}&x^{(2,3)}_{1} \\
x^{(2,1)}_{2} & x^{(2,1)}_{2} & x^{(2,2)}_{2}&x^{(2,2)}_{2}&x^{(2,3)}_{2}&x^{(2,3)}_{2} \\
\hline
x^{(3,1)}_{2} & x^{(3,1)}_{2} & x^{(3,2)}_{2}&x^{(3,2)}_{2}&x^{(3,3)}_{2}&x^{(3,3)}_{2} \\
x^{(3,1)}_{2} & x^{(3,1)}_{1} & x^{(3,2)}_{1}&x^{(3,2)}_{2}&x^{(3,3)}_{2}&x^{(3,3)}_{1} 
\end{array}
\mright).
\end{align}
\begin{theorem}\label{acute hook 4 type thm1}
Let $G'(N,m)$ be a block hook matrix of order $Nm$ defined as \eqref{hook 4type acute}. Then \[\det(G'(N,m))=\prod_{i=1}^{m}\det(X_{(i, i+1)}), \]	where $X_{(i, i+1)}$ is defined by \eqref{hookmat:1} and $x^{(i,j)}_{m+1}=0$  for all $1\leq i, j \leq N.$	
\end{theorem}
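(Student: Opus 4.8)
The plan is to reduce to Theorem~\ref{4 type hook them 1} by realising $G'(N,m)$ as the image of $G(N,m)$ under an even number of elementary interchanges. Concretely, I would apply to $G(N,m)$ the simultaneous row and column interchanges \eqref{important row operation1}--\eqref{important row operation1 condition}: inside every block-row reverse the order of its $m$ rows, and inside every block-column reverse the order of its $m$ columns.

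First I would verify that this transformation sends $G(N,m)$ to $G'(N,m)$. The point is purely local: reversing both the rows and the columns inside one $m\times m$ block turns a hook of the $H_A$-type into one of the $H_B$-type and a hook of the $H_C$-type into one of the $H_D$-type, and conversely. This is exactly the observation underlying Propositions~\ref{elem hook thm B_m}, \ref{elem hook thm C_m} and \ref{elem hook thm D_m}, where $B_m$, $C_m$ and $D_m$ were each produced from $A_m$ by such reversals. Hence a block of $G(N,m)$ in block-position $(i,j)$ is replaced by a block of the complementary shape, so that $A_m\leftrightarrow B_m$ on the positions $(i,j)$ with $i+j$ even and $C_m\leftrightarrow D_m$ on the positions $(i,j)$ with $i+j$ odd. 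Matching this against the definition \eqref{hok 4type 1} of $G$ and the definition \eqref{hook 4type acute} of $G'$, case by case over the four parities of $(i,j)$, shows the resulting matrix is precisely $G'(N,m)$; the passage from the displayed $G(3,2)$ to the displayed $G'(3,2)$ is a concrete instance.

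Next comes the sign. A single reversal of $m$ entries is a product of $\lfloor m/2\rfloor$ transpositions, and we carry out one reversal on each of the $N$ block-rows and each of the $N$ block-columns, i.e. a total of $2N\lfloor m/2\rfloor$ transpositions. Since this count is even, $\det(G'(N,m))=\det(G(N,m))$, and Theorem~\ref{4 type hook them 1} then yields $\det(G'(N,m))=\prod_{i=1}^m\det(X_{(i,i+1)})$.

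I do not anticipate a genuine difficulty; the only things needing care are the four-case check that the interchanges really carry $G$ onto $G'$ and the transposition count for the sign. If one would rather argue directly from the base result, the same strategy works starting from $A(N,m)$: apply the row interchanges \eqref{important row operation2}--\eqref{important row operation2 condition} (reversing the rows within the even-indexed block-rows, so those $A_m$ blocks become $C_m$) together with the analogous column interchanges on the even-indexed block-columns; one checks the outcome is $G'(N,m)$, the number of transpositions is $2\lfloor N/2\rfloor\lfloor m/2\rfloor$, still even, and hence $\det(G'(N,m))=\det(A(N,m))=\prod_{i=1}^m\det(X_{(i,i+1)})$ by Theorem~\ref{hookmatA}.
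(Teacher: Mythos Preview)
Your proposal is correct. Both routes you outline work: the full reversal \eqref{important row operation1}--\eqref{important row operation1 condition} applied inside every block swaps $A_m\leftrightarrow B_m$ and $C_m\leftrightarrow D_m$ blockwise, carrying $G(N,m)$ to $G'(N,m)$ with $2N\lfloor m/2\rfloor$ transpositions, so Theorem~\ref{4 type hook them 1} finishes; and reversing rows only in the even-indexed block-rows together with columns only in the even-indexed block-columns of $A(N,m)$ produces $G'(N,m)$ with $2\lfloor N/2\rfloor\lfloor m/2\rfloor$ transpositions, so Theorem~\ref{hookmatA} applies directly.

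The paper's own proof also starts from $A(N,m)$, so your second route matches its intent. As literally written, though, the paper cites \eqref{important row operation1}--\eqref{important row operation1 condition}, which act on \emph{all} block-rows and block-columns; applied to $A(N,m)$ those operations yield $B(N,m)$ (this is exactly the proof of Theorem~\ref{hookmatB}), not $G'(N,m)$. The operations actually required are the ones you spell out in your alternative route---row reversals confined to the even-indexed block-rows and the analogous column reversals on the even-indexed block-columns---so your write-up in fact repairs a slip in the paper's reference. Your primary route via $G(N,m)$ is a minor variant that trades one earlier theorem for another, with the same elementary mechanism and sign count.
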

\begin{proof}
 Applying the same row and column operations depicted by  \eqref{important row operation1} and \eqref{important row operation1 condition} on the matrix $A(N,m),$ we get the matrix $G'(N,m).$ Hence the theorem. 
\end{proof}
\section{combinatorial interpretations of hook determinants}
Combinatorial interpretations of determinants can bring deeper understanding to their evaluations; this is especially true when the entries of a matrix have natural
graph theoretic descriptions \cite{18, 14, 41, 40, 11}. In this section, we give combinatorial interpretations of our main results (that are stated in previous sections) regarding hook determinants.  Before plunging into the proof, let us recall the celebrated ``Gessel-Lindstr\"om-Viennot'' lemma. See \cite{15, 41}, for details. For the sake of completeness, let us reproduce the lemma from \cite{15}. Let $\Gamma$ be a weighted, acyclic digraph. The vertex set and the edge set of the graph $\Gamma,$ denoted by $V(\Gamma)$ and $E(\Gamma)$ respectively. A path in  $\Gamma$ is a sequence of distinct vertices $V_1, V_2, \cdots, V_r$ such that $V_i, V_{i+1} (i=1, \cdots, r-1)$ is an edge directed from $V_i$ to $V_{i+1}.$  For simplicity we denote a path $V_1, V_2, \cdots, V_r$  by $V_1 V_2 \cdots V_r$ and hence an edge $V_i, V_{i+1}$ by $V_iV_{i+1}.$ The \emph{weight} of a path $P,$ denoted by $w(P)$ \text { is the product of weights of all edges involved in the path  } and the \emph{length} of a path $P,$ denoted by $\ell(P),$ is the number of edges involved in the path $P.$ Suppose that $U=\{ U_1, U_2, \cdots, U_n\}$ and $V=\{V_1, V_2, \cdots, V_n\}$ are two $n$-sets of vertices of $\Gamma$ (not necessarily disjoint). To $U$ and $V$, associate the \emph{path matrix} $M=(m_{ij})_{n\times n},$ where $m_{ij}=\sum\limits_{P:U_i\rightarrow V_j}w(P),$  $P:U_i\rightarrow V_j$ \text{ denotes a path from } $U_i \text{ to } V_j.$ A \emph{path system} from $U$ to $V$ is an ordered pair $(\mathcal{P}, \sigma),$ where $\sigma$ is a permutation of $n$ element set and $\mathcal{P}$ is a set of $n$ paths $P_i: U_i\rightarrow V_{\sigma(i)}.$ The sign of a path system $(\mathcal{P}, \sigma)$ is $\text{sgn} (\sigma).$ The \emph{weight} of $(\mathcal{P}, \sigma),$ denoted by $w(\mathcal{P}, \sigma)$ is $\prod\limits_{i=1}^nw(P_i)$. We call the path system \emph{vertex-disjoint} if no two paths have a common vertex. Let $VD_{\Gamma}$ be the family of vertex-disjoint path systems in the graph $\Gamma.$ Then the Gessel-Lindstr\"om-Viennot lemma is the following;
\begin{lemma}[Gessel-Lindstr\"om-Viennot lemma]\label{lgv lemma}
Let $\Gamma=(V(\Gamma), E(\Gamma))$ be a weighted acyclic digraph. Suppose $U=\{ U_1, U_2, \cdots, U_n\}, V=\{V_1, V_2, \cdots, V_n\}$ are two $n$-sets of vertices of $\Gamma$ (not necessarily disjoint) and $M$ be a path matrix from $U$ to $V.$ Then	
 \[\det(M)=\sum\limits_{(\mathcal{P}, \sigma)\in VD_{\Gamma}}\text{ sgn}(\mathcal{P}, \sigma)w(\mathcal{P}, \sigma).\]
\end{lemma}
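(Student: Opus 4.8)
The statement to be proved is the Gessel–Lindström–Viennot lemma itself, so the plan is to give the standard sign-reversing involution argument. The plan is to start from the right-hand side, expand each path-matrix entry $m_{ij}=\sum_{P:U_i\to V_j}w(P)$ inside the permutation expansion of $\det(M)$, and interchange the order of summation. First I would write
\[
\det(M)=\sum_{\sigma}\operatorname{sgn}(\sigma)\prod_{i=1}^{n}m_{i\sigma(i)}
=\sum_{\sigma}\operatorname{sgn}(\sigma)\prod_{i=1}^{n}\Bigl(\sum_{P_i:U_i\to V_{\sigma(i)}}w(P_i)\Bigr),
\]
and then distribute the product over the inner sums to obtain
\[
\det(M)=\sum_{(\mathcal{P},\sigma)}\operatorname{sgn}(\sigma)\,w(\mathcal{P},\sigma),
\]
the sum now ranging over \emph{all} path systems $(\mathcal{P},\sigma)$ from $U$ to $V$, vertex-disjoint or not, since $w(\mathcal{P},\sigma)=\prod_i w(P_i)$ by definition. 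Thus the claim reduces to showing that the contribution of the non-vertex-disjoint path systems cancels in pairs.

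The core step is the construction of a sign-reversing, weight-preserving involution $\Phi$ on the set of path systems that share at least one common vertex. Given such a $(\mathcal{P},\sigma)$, I would pick the smallest index $i$ for which $P_i$ meets some other path, let $V^\ast$ be the first vertex along $P_i$ (in its order of traversal) that lies on another path, and among the paths through $V^\ast$ let $j$ be the smallest index other than $i$ with $V^\ast\in P_j$ (taking $j$ so that $V^\ast$ is also chosen canonically on $P_j$, e.g. the first such shared vertex when we run the same selection rule). Then define $\Phi(\mathcal{P},\sigma)=(\mathcal{P}',\sigma')$ by swapping the tails of $P_i$ and $P_j$ after $V^\ast$: the new $P_i'$ follows $P_i$ up to $V^\ast$ and then $P_j$ to $V_{\sigma(j)}$, and symmetrically for $P_j'$; all other paths are unchanged, and $\sigma'=\sigma\circ(i\,j)$. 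One checks that $\Phi$ is an involution (the selection data $i,j,V^\ast$ are recovered unchanged, since the portions of the paths up to $V^\ast$ are untouched and the ``first bad vertex'' is determined by those portions), that it preserves the total edge multiset and hence the weight, and that $\operatorname{sgn}(\sigma')=-\operatorname{sgn}(\sigma)$ because $\sigma'$ differs from $\sigma$ by a transposition. Therefore all non-disjoint terms cancel, leaving exactly $\sum_{(\mathcal{P},\sigma)\in VD_\Gamma}\operatorname{sgn}(\mathcal{P},\sigma)w(\mathcal{P},\sigma)$.

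Two routine points must be dispatched along the way. First, the distributive expansion in the opening display is legitimate only because acyclicity of $\Gamma$ guarantees that between any two vertices there are finitely many paths, so each $m_{ij}$ is a finite sum and the interchange of sums is purely formal; I would note this explicitly. Second, the tail-swap in $\Phi$ genuinely produces \emph{paths} (sequences of distinct vertices) is the one place that needs a word of care: after swapping at the \emph{first} shared vertex $V^\ast$ on $P_i$, the initial segment of $P_i'$ up to $V^\ast$ has no vertex in common with the tail taken from $P_j$ beyond $V^\ast$ — if it did, that common vertex would be an earlier shared vertex on $P_i$, contradicting minimality; similarly for $P_j'$. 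The main obstacle is thus purely bookkeeping: pinning down the canonical choice of $(i,j,V^\ast)$ so that $\Phi\circ\Phi=\mathrm{id}$, and verifying the distinctness of vertices in the swapped paths. Once that is set up, the sign and weight computations are immediate, and the lemma follows.
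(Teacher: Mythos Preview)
Your argument is the standard sign-reversing involution proof of the Gessel--Lindstr\"om--Viennot lemma and is essentially correct. Note, however, that the paper does \emph{not} prove this lemma at all: it is quoted as a known result with a reference to the literature (``See \cite{15,41}, for details''), so there is no ``paper's own proof'' to compare against. What you have supplied is precisely the classical proof one finds in those references.

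One small point worth tightening: your ``similarly for $P_j'$'' is not quite symmetric. For $P_i'$ the minimality of $V^\ast$ along $P_i$ directly rules out an earlier shared vertex, but for $P_j'$ the hypothetical repeated vertex $W$ would lie on $P_i$ \emph{after} $V^\ast$, so minimality does not apply verbatim. The clean fix is to invoke acyclicity instead: if $W$ occurred on the initial segment of $P_j$ (before $V^\ast$) and on the tail of $P_i$ (after $V^\ast$), then the concatenation $W\to V^\ast$ along $P_j$ followed by $V^\ast\to W$ along $P_i$ would be a directed cycle, contradicting that $\Gamma$ is acyclic. With that remark in place, the tail-swap always yields genuine paths and your involution is well defined; the rest of the argument goes through as written.
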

Now we give combinatorial proofs of theorems stated in previous sections using Lemma \ref{lgv lemma}.
\begin{proof}[Combinatorial proof of Theorem \ref{elem hook them Am}]
Consider the acyclic weighted digraph in Figure \ref{fig:combinatorial proof of A_{m}}. We choose the sets $\{U_1, U_2, \cdots, U_m\}$ and $\{V_1, V_2, \cdots, V_m\}$ as the initial and terminal sets of vertices respectively. Clearly the associate path matrix is the matrix $A_m(x_1, \cdots, x_m)$ defined as \eqref{hookmat:2a}. Evidently, there is only one vertex disjoint path system $(\mathcal{P}, \text {Id}),$ where \text{Id} is the identity permutation of $m$ element set and $\mathcal{P}$ is a set of $m$ paths $P_i:U_i\rightarrow V_i (i=1, \cdots, m).$  Now applying Lemma \ref{lgv lemma} we get the theorem.
\end{proof}	
 \begin{figure}[H]
 	\tiny
 	\tikzstyle{ver}=[]
 	\tikzstyle{vert}=[circle, draw, fill=black!100, inner sep=0pt, minimum width=4pt]
 	\tikzstyle{vertex}=[circle, draw, fill=black!00, inner sep=0pt, minimum width=4pt]
 	\tikzstyle{edge} = [draw,thick,-]
 	\tikzstyle{node_style} = [circle,draw=blue,fill=blue!20!,font=\sffamily\Large\bfseries]
 	\centering
 	\tikzset{->,>=stealth', auto,node distance=1cm,
 		thick,main node/.style={circle,draw,font=\sffamily\Large\bfseries}}
 	\tikzset{->-/.style={decoration={
 				markings,
 				mark=at position #1 with {\arrow{>}}},postaction={decorate}}}
 \begin{tikzpicture}[scale=1]
 	\tikzstyle{edge_style} = [draw=black, line width=2mm, ]
 	\tikzstyle{node_style} = [draw=blue,fill=blue!00!,font=\sffamily\Large\bfseries]
 	\node (A1) at (0,0)   {$\bf{U_1}$};
 	\node (A2) at (3,0)   {$\bf{U_2}$};
 	\node (A3) at (6,0)  {$\bf{U_3}$};
 	\node (A4) at (8.3,.1)  {$\bf{U_{m-1}}$};
 	\node (A4) at (11,.1)  {$\bf{U_m}$};
 	\node (A4) at (6.9,-.25)  {$\bf{\cdots}$};
 	\node (A4) at (7.3,-.25)  {$\bf{\cdots}$};
 	\node (B1) at (-.2,-2.4)   {$\bf{V_1}$};
 	\node (B1) at (3,-2.4)   {$\bf{V_2}$};
 	\node (B1) at (6,-2.4)   {$\bf{V_3}$};
 	\node (A4) at (8.3,-2.4)  {$\bf{V_{m-1}}$};
 	\node (A4) at (11,-2.4)  {$\bf{V_m}$};
 	\node (A4) at (6.9,-2.1)  {$\bf{\cdots}$};
 	\node (A4) at (7.3,-2.1)  {$\bf{\cdots}$};
 	%
 	\fill[black!100!](0,-.2) circle (.05);
 	\fill[black!100!](3.1,-.2) circle (.05);
 	\fill[black!100!](5.9,-.2) circle (.05);
 	\fill[black!100!](8.1,-.2) circle (.05);
 	\fill[black!100!](11,-.2) circle (.05);
 	    \fill[black!100!](-.1,-2) circle (.05);
 		\fill[black!100!](3.1,-2) circle (.05);
 		\fill[black!100!](5.9,-2) circle (.05);
 		\fill[black!100!](8.1,-2) circle (.05);
 		\fill[black!100!](11,-2) circle (.05); 
 	\draw[->, line width=.2 mm] (10.8,-.2) -- (8.2,-.2);
 	\draw[->, line width=.2 mm] (5.8,-.2) -- (3.2,-.2);
 	\draw[->, line width=.2 mm] (3,-.2) -- (.1,-.2);
 	\draw[->, line width=.2 mm] (-.1,-.3) -- (-.1,-1.9);
 	\draw[->, line width=.2 mm] (3.1,-.3) -- (3.1,-1.9);
 	\draw[->, line width=.2 mm] (5.9,-.3) -- (5.9,-1.9);
 	\draw[->, line width=.2 mm] (8,-.3) -- (8,-1.9);
 	\draw[->, line width=.2 mm] (11,-.3) -- (11,-1.9);
 	\draw[->, line width=.2 mm] (0,-2) -- (3,-2);
 	\draw[->, line width=.2 mm] (3.2,-2) -- (5.8,-2);
 	\draw[->, line width=.2 mm] (8.3,-2) -- (10.8,-2);
 	\node (B1) at (-.4,-1)   {$\bf{x_m}$};
 \node (B1) at (2,-1)   {$\bf{(x_{m-1}-x_m)}$};
 \node (B1) at (4.7,-1)   {$\bf{(x_{m-2}-x_{m-1})}$};
 	\node (B1) at (7.2,-1)   {$\bf{(x_{2}-x_{3})}$};
 		\node (B1) at (10.2,-1)   {$\bf{(x_{1}-x_{2})}$};
 	\node (B1) at (1,.1)   {$\bf{1}$};
 	\node (B1) at (4.5,.1)   {$\bf{1}$};
 	\node (B1) at (1,.1)   {$\bf{1}$};
 	\node (B1) at (1,-2.3)   {$\bf{1}$};
 	\node (B1) at (4.5,-2.3)   {$\bf{1}$};
 	\node (B1) at (9.5,.15)   {$\bf{1}$};
 	\node (B1) at (9.5,-2.3)   {$\bf{1}$};
 	\end{tikzpicture}
 \caption{A weighted acyclic digraph $\Gamma_m,$ and the weight of each edge is described on figure.}
 	\label{fig:combinatorial proof of A_{m}}	
 \end{figure}
 \begin{rem}
In exactly the same way we can give combinatorial explanation of theorems \ref{elem hook thm B_m}, \ref{elem hook thm C_m} and \ref{elem hook thm D_m}. In those cases also we use $\Gamma_{m}$ as combinatorial object possibly permuting some of its vertices.
 \end{rem}
\begin{proof}[Combinatorial proof of Theorem \ref{hookmatA}]
\begin{figure}[ht!]
	\tiny
	\tikzstyle{ver}=[]
	\tikzstyle{vert}=[circle, draw, fill=black!100, inner sep=0pt, minimum width=4pt]
	\tikzstyle{vertex}=[circle, draw, fill=black!00, inner sep=0pt, minimum width=4pt]
	\tikzstyle{edge} = [draw,thick,-]
	\tikzstyle{node_style} = [circle,draw=blue,fill=blue!20!,font=\sffamily\Large\bfseries]
	\centering
	\tikzset{->-/.style={decoration={
				markings,
				mark=at position #1 with {\arrow{>}}},postaction={decorate}}}
	\begin{tikzpicture}[scale=1]
	\tikzstyle{edge_style} = [draw=black, line width=2mm, ]
	\tikzstyle{node_style} = [draw=blue,fill=blue!00!,font=\sffamily\Large\bfseries]
	\fill[black!100!](0,0) circle (.05);
	\fill[black!100!](0,1.6) circle (.05);
	\fill[black!100!](0,3.3) circle (.05);
	\fill[black!100!](0, 4.9) circle (.05);
	\fill[black!100!](0,6.6) circle (.05);
	\draw[->](0,0)--(0,1.5);
	\draw[->](0,1.7)--(0,3.2);
	\node[] (g) at  (0,4){$\vdots$};
	\node[] (g) at  (0,4.4){$\vdots$};
	\draw[->](0,5)--(0,6.5);
	\node[] (d) at  (-.5,0) {$\bf{V_1}$};
	\node[] (e) at  (-.5,1.5) {$\bf{V_2}$};
	\node[] (f) at  (-.5,3.2) {$\bf{V_3}$};
	\node[] (g) at  (-.7,4.8) {$\bf{V_{m-1}}$};
	\node[] (h) at  (-.5,6.5) {$\bf{V_m}$};
	\fill[black!100!](2,0) circle (.05);
	\fill[black!100!](2,1.6) circle (.05);
	\fill[black!100!](2,3.3) circle (.05);
	\fill[black!100!](2, 4.9) circle (.05);
	\fill[black!100!](2,6.6) circle (.05);
	\draw[->](2,0)--(2,1.5);
	\draw[->](2,1.7)--(2,3.2);
	\node[] (g) at  (2,4){$\vdots$};
	\node[] (g) at  (2,4.4){$\vdots$};
	\draw[->](2,5)--(2,6.5);
	%
	\node[] (d) at  (1.4,0) {$\bf{V_{m+1}}$};
	\node[] (e) at  (1.4,1.5) {$\bf{V_{m+2}}$};
	\node[] (f) at  (1.4,3.2) {$\bf{V_{m+3}}$};
	\node[] (g) at  (1.3,4.8) {$\bf{V_{2m-1}}$};
	\node[] (h) at  (1.5,6.5) {$\bf{V_{2m}}$};
	\node[] (g) at  (3,4){$\cdots$};
	\node[] (g) at  (3.4,4){$\cdots$};
	\node[] (g) at  (3.8,4){$\cdots$};
	\fill[black!100!](5,0) circle (.05);
	\fill[black!100!](5,1.6) circle (.05);
	\fill[black!100!](5,3.3) circle (.05);
	\fill[black!100!](5, 4.9) circle (.05);
	\fill[black!100!](5,6.6) circle (.05);
	\draw[->](5,0)--(5,1.5);
	\draw[->](5,1.7)--(5,3.2);
	\node[] (g) at  (5,4){$\vdots$};
	\node[] (g) at  (5,4.4){$\vdots$};
	\draw[->](5,5)--(5,6.5);
	\node[] (d) at  (4,0) {$\bf{V_{Nm-(m-1)}}$};
	\node[] (e) at  (4,1.5) {$\bf{V_{Nm-(m-2)}}$};
	\node[] (f) at  (4,3.2) {$\bf{V_{Nm-(m-3)}}$};
	\node[] (g) at  (4.3,4.8) {$\bf{V_{Nm-1}}$};
	\node[] (h) at  (4.5,6.5) {$\bf{V_{Nm}}$};
	%
	\fill[black!100!](8,-.1) circle (.05);
	\fill[black!100!](8,1.6) circle (.05);
	\fill[black!100!](8,3.3) circle (.05);
	\fill[black!100!](8, 4.9) circle (.05);
	\fill[black!100!](8,6.6) circle (.05);
	\draw[<-](8,0)--(8,1.5);
	\draw[<-](8,1.7)--(8,3.2);
	\node[] (g) at  (8,4){$\vdots$};
	\node[] (g) at  (8,4.4){$\vdots$};
	\draw[<-](8,5)--(8,6.5);
	\node[] (d) at  (7.5,0) {$\bf{U_1}$};
	\node[] (e) at  (7.5,1.5) {$\bf{U_2}$};
	\node[] (f) at  (7.5,3.2) {$\bf{U_3}$};
	\node[] (g) at  (7.3,4.8) {$\bf{U_{m-1}}$};
	\node[] (h) at  (7.5,6.5) {$\bf{U_m}$};
	\fill[black!100!](10,-.1) circle (.05);
	\fill[black!100!](10,1.6) circle (.05);
	\fill[black!100!](10,3.3) circle (.05);
	\fill[black!100!](10, 4.9) circle (.05);
	\fill[black!100!](10,6.6) circle (.05);
	\draw[<-](10,0)--(10,1.5);
	\draw[<-](10,1.7)--(10,3.2);
\node[] (g) at  (10,4){$\vdots$};
\node[] (g) at  (10,4.4){$\vdots$};
	\draw[<-](10,5)--(10,6.5);
	%
	\node[] (d) at  (9.4,0) {$\bf{U_{m+1}}$};
	\node[] (e) at  (9.4,1.5) {$\bf{U_{m+2}}$};
	\node[] (f) at  (9.4,3.2) {$\bf{U_{m+3}}$};
	\node[] (g) at  (9.4,4.8) {$\bf{U_{2m-1}}$};
	\node[] (h) at  (9.5,6.5) {$\bf{U_{2m}}$};
\node[] (g) at  (11,4){$\cdots$};
\node[] (g) at  (11.4,4){$\cdots$};
\node[] (g) at  (11.8,4){$\cdots$};
	\fill[black!100!](13.5,-.1) circle (.05);
	\fill[black!100!](13.5,1.6) circle (.05);
	\fill[black!100!](13.5,3.3) circle (.05);
	\fill[black!100!](13.5, 4.9) circle (.05);
	\fill[black!100!](13.5,6.6) circle (.05);
	\draw[<-](13.5,0)--(13.5,1.5);
	\draw[<-](13.5,1.7)--(13.5,3.2);
	\node[] (g) at  (13.5,4){$\vdots$};
	\node[] (g) at  (13.5,4.4){$\vdots$};
	\draw[<-](13.5,5)--(13.5,6.5);
	\node[] (d) at  (12.5,0) {$\bf{U_{Nm-(m-1)}}$};
	\node[] (e) at  (12.5,1.5) {$\bf{U_{Nm-(m-2)}}$};
	\node[] (f) at  (12.5,3.2) {$\bf{U_{Nm-(m-3)}}$};
	\node[] (g) at  (12.8,4.8) {$\bf{U_{Nm-1}}$};
	\node[] (h) at  (12.8,6.5) {$\bf{U_{Nm}}$};
	\end{tikzpicture}
	\caption{A weighted acyclic digraph with $w(U_{j+1}U_{j})=1=w(V_jV_{j+1}),$ for $i=1, \cdots, Nm-1.$ }
	\label{fig:hookf1}	 
\end{figure}
To give a combinatorial interpretation of the theorem we use Gessel-Lindstr\"om-Viennot lemma. So first we have to construct a weighted acyclic digraph $\Gamma_{N, m}$ whose path matrix is the matrix $A(N,m)$ (defined as \eqref{hookmat:3}). For  each $i\in [m],$ let us define $\tilde{U}_i=\{U_i, U_{m+i}, \cdots, U_{(N-2)m+i}, U_{(N-1)m+i} \}$ and $\tilde{V}_i=\{V_i, V_{m+i}, \cdots, V_{(N-2)m+i}, V_{(N-1)m+i}\}.$ We declare the vertex set $V(\Gamma_{N, m})=\bigcup\limits_{i\in [m]}\left(\tilde{U}_i\cup\tilde{V}_i\right).$ Now in $\Gamma_{N, m},$ we want the vertices $\tilde{U_i}\cup \tilde{V_i}$ to form a weighted complete bipartite digraph $\Gamma_i \cong K_{N, N}$ with bi-partition sets $\tilde{U}_i$ and $ \tilde{V}_i$ for each $i\in [m].$ The direction of each edge $U_kV_{\ell} (k=tm+i, \ell=rm+i, \text{ where } t, \ell \in \{0, \cdots, N-1\})$ is always taken from $U_k$ towards $V_{\ell}$ and $w(U_kV_{\ell})=x^{(t+1,r+1)}_{m-i+1}-x^{(t+1,r+1)}_{m-i+2}.$ Clearly each $\Gamma_i$ is acyclic. Now we finish the construction of $\Gamma_{N, m}$ by adjoining additional weighted directed edges to $\cup_{i=1}^{m}E(\Gamma_i),$ as depicted in Figure \ref{fig:hookf1}	  
Notice that $\Gamma_{N, m}$ is acyclic. Now we call $U=\{U_1, U_2, \cdots, U_{Nm}\}$ to be the initial set of vertices and $V=\{V_1, V_2, \cdots, V_{Nm}\}$ to be the terminal set of vertices of the graph $\Gamma_{N, m}$. Then the path matrix of the graph $\Gamma_{N, m}$ is the  matrix $A(N,m).$  
\begin{figure}[ht!]
	\tiny
	\tikzstyle{ver}=[]
	\tikzstyle{vert}=[circle, draw, fill=black!100, inner sep=0pt, minimum width=4pt]
	\tikzstyle{vertex}=[circle, draw, fill=black!00, inner sep=0pt, minimum width=4pt]
	\tikzstyle{edge} = [draw,thick,-]
	\tikzstyle{node_style} = [circle,draw=blue,fill=blue!20!,font=\sffamily\Large\bfseries]
	\centering
	\tikzset{->,>=stealth', auto,node distance=1cm,
		thick,main node/.style={circle,draw,font=\sffamily\Large\bfseries}}
	\tikzset{->-/.style={decoration={
				markings,
				mark=at position #1 with {\arrow{>}}},postaction={decorate}}}
	\begin{tikzpicture}[scale=1]
	\tikzstyle{edge_style} = [draw=black, line width=2mm, ]
	\tikzstyle{node_style} = [draw=blue,fill=blue!00!,font=\sffamily\Large\bfseries]
	\fill[black!100!](6.05,0) circle (.05);
	\fill[black!100!](3,0) circle (.05);
	\fill[black!100!](8,-2) circle (.05);
	\fill[black!100!](5.95, -4.5) circle (.05);
	\fill[black!100!](3, -4.5) circle (.05);
	\fill[black!100!](1,-2.05) circle (.05);
	\draw[](3,0)--(6,0);
	\draw[](8,-2)--(6.1,0);
	\draw[](3,0)--(1,-2);
	\draw[](8,-2)--(6,-4.5);
	\draw[](3,-4.5)--(5.9,-4.5);
	\draw[](3,-4.5)--(1,-2.1);
	\node (B1) at (3,-4.9)   {$\bf{U_1}$};
	\node (B1) at (5.9,-4.9)   {$\bf{V_1}$};
	\node (B1) at (8.4,-2)   {$\bf{U_3}$};
	\node (B1) at (6.5,0)   {$\bf{V_3}$};
	\node (B1) at (2.6,0)   {$\bf{U_5}$};
	\node (B1) at (.7, -2)   {$\bf{V_5}$};
	%
	\node (B1) at (16, 2.8)   {$\bf{w(U_1V_1)=x^{(1,1)}_{2}}$};
	\node (B1) at (16, 2.2)   {$\bf{w(U_3V_1)=x^{(2,1)}_{2}}$};
	\node (B1) at (16, 1.6)   {$\bf{w(U_3V_3)=x^{(2,2)}_{2}}$};
	\node (B1) at (16, 1)   {$\bf{w(U_5V_3)=x^{(3,2)}_{2}}$};
	\node (B1) at (16, .4)   {$\bf{w(U_5V_5)=x^{(3,3)}_{2}}$};
	\node (B1) at (16, -.2)   {$\bf{w(U_1V_5)=x^{(1,3)}_{2}}$};
	\draw[](3,-4.5)--(6.05,-.05);
	\draw[](3,0)--(5.9, -4.45);
	\draw[](8,-2)--(1.05,-2.05);
	\node (B1) at (16, -.8)   {$\bf{w(U_3V_5)=x^{(2,3)}_{2}}$};
	\node (B1) at (16, -1.4)   {$\bf{w(U_1V_3)=x^{(1,2)}_{2}}$};
	\node (B1) at (16, -2)   {$\bf{w(U_5V_1)=x^{(3,1)}_{2}}$};
	\draw[](3,3)--(3,0.05);
	\draw[](6,0)--(5.9,3);
	\draw[](8,1)--(8,-1.95);
	\draw[](1,-2)--(1, .9);
	\draw[](6,-4.5)--(6,-1.25);
	\draw[](3,-1.2)--(3,-4.45);
	\draw[](3,3)--(5.85,3);
	\draw[](8,1)--(5.95,3);
	\draw[](8,1)--(6,-1.2);
	\draw[](3,-1.2)--(5.94,-1.2);
	\draw[](3,-1.2)--(1.05,1);
	\draw[](3,3)--(.95,1);
	\node (B1) at (2.6,-1.2)   {$\bf{U_2}$};
	\node (B1) at (6.4,-1.3)   {$\bf{V_2}$};
	\node (B1) at (8.4,1)   {$\bf{U_4}$};
	\node (B1) at (5.85,3.3)   {$\bf{V_4}$};
	\node (B1) at (2.6,3)   {$\bf{U_6}$};
	\node (B1) at (.6,.95)   {$\bf{V_6}$};
	\node (B1) at (12,-.4)
	{$\bf{w(U_6V_6)=x^{(3,3)}_{1}-x^{(3,3)}_{2}}$};
	\node (B1) at (12,-.9)   {$\bf{w(U_4V_4)=x^{(2,2)}_{1}-x^{(2,2)}_{2}}$};
	\node (B1) at (12, -1.4)   {$\bf{w(U_6V_4)=x^{(3,2)}_{1}-x^{(3,2)}_{2}}$};
	%
	\fill[black!100!](3,3) circle (.05);
	\fill[black!100!](5.91,3) circle (.05);
	\fill[black!100!](8,1) circle (.05);
	\fill[black!100!](5.96, -1.2) circle (.05);
	\fill[black!100!](3, -1.2) circle (.05);
	\fill[black!100!](.98,.95) circle (.05);
	\draw[](3,3)--(6, -1.15);
	\draw[](3,-1.2)--(5.82,2.95);
	\draw[](8,1)--(1.08,1.02);
	\node (B1) at (12,2.8)   {$\bf{w(U_6V_2)=x^{(3,1)}_{1}-x^{(3,1)}_{2}}$};
	\node (B1) at (12,2.2)   {$\bf{w(U_2V_4)=x^{(1,2)}_{1}-x^{(1,2)}_{2}}$};
	\node (B1) at (12,1.7)   {$\bf{w(U_4V_6)=x^{(2,3)}_{1}-x^{(2,3)}_{2}}$};
	\node (B1) at (12,.6)   {$\bf{w(U_2V_2)=x^{(1,1)}_{1}-x^{(1,1)}_{2}}$};
	\node (B1) at (12,.1)   {$\bf{w(U_2V_6)=x^{(1,3)}_{1}-x^{(1,3)}_{2}}$};
	\node (B1) at (12,1.1)   {$\bf{w(U_4V_2)=x^{(2,1)}_{1}-x^{(2,1)}_{2}}$};
	\node (B1) at (12,-2.8)   {$\bf{w(U_6U_5)}=\bf{w(U_2U_1)}=\bf{w(U_4U_3)=1}$};
	\node (B1) at (12,-3.4)   {$\bf{w(V_1V_2)}=\bf{w(V_3V_4)}=\bf{w(V_5V_6)=1}$};
	\end{tikzpicture}
	\caption{A weighted acyclic digraph $\Gamma_{3, 2}$. The weight of the  edges appear on right side of this figure.}
	\label{fig:hookf2}	
\end{figure}
So, by  Gessel-Lindstr\"om-Viennot lemma, we can write 
\begin{equation}\label{hookdetform1}
{\det}(A(N,m))=\sum\limits_{\mathcal{P}_{\Gamma_{N, m}}\in VD_{\Gamma_{N, m}}}\text{sgn}(\mathcal{P}_{\Gamma_{N, m}})w(\mathcal{P}_{\Gamma_{N, m}}). 
\end{equation}
\begin{figure}[ht!]
	\tiny
	\tikzstyle{ver}=[]
	\tikzstyle{vert}=[circle, draw, fill=black!100, inner sep=0pt, minimum width=4pt]
	\tikzstyle{vertex}=[circle, draw, fill=black!00, inner sep=0pt, minimum width=4pt]
	\tikzstyle{edge} = [draw,thick,-]
	\tikzstyle{node_style} = [circle,draw=blue,fill=blue!20!,font=\sffamily\Large\bfseries]
	\centering
	\tikzset{->,>=stealth', auto,node distance=1cm,
		thick,main node/.style={circle,draw,font=\sffamily\Large\bfseries}}
	\tikzset{->-/.style={decoration={
				markings,
				mark=at position #1 with {\arrow{>}}},postaction={decorate}}}
	\begin{tikzpicture}[scale=1]
	\tikzstyle{edge_style} = [draw=black, line width=2mm, ]
	\tikzstyle{node_style} = [draw=blue,fill=blue!00!,font=\sffamily\Large\bfseries]
	\fill[black!100!](6.05,0) circle (.05);
	\fill[black!100!](3,0) circle (.05);
	\fill[black!100!](8,-2) circle (.05);
	\fill[black!100!](5.95, -4.5) circle (.05);
	\fill[black!100!](3, -4.5) circle (.05);
	\fill[black!100!](1,-2.05) circle (.05);
	\draw[](3,0)--(6,0);
	\draw[](8,-2)--(6.1,0);
	\draw[](3,0)--(1,-2);
	\draw[](8,-2)--(6,-4.5);
	\draw[](3,-4.5)--(5.9,-4.5);
	\draw[](3,-4.5)--(1,-2.1);
	\node (B1) at (3,-4.9)   {$\bf{U_1}$};
	\node (B1) at (5.9,-4.9)   {$\bf{V_1}$};
	\node (B1) at (8.4,-2)   {$\bf{U_3}$};
	\node (B1) at (6.5,0)   {$\bf{V_3}$};
	\node (B1) at (2.6,0)   {$\bf{U_5}$};
	\node (B1) at (.7, -2)   {$\bf{V_5}$};
	%
	\node (B1) at (12,0)   {$\bf{w(U_1V_1)=x^{(1,1)}_{2}}$};
	\node (B1) at (12,-.5)   {$\bf{w(U_3V_1)=x^{(2,1)}_{2}}$};
	\node (B1) at (12,-1)   {$\bf{w(U_3V_3)=x^{(2,2)}_{2}}$};
	\node (B1) at (12,-1.5)   {$\bf{w(U_5V_3)=x^{(3,2)}_{2}}$};
	\node (B1) at (12,-2)   {$\bf{w(U_5V_5)=x^{(3,3)}_{2}}$};
	\node (B1) at (12,-2.5)   {$\bf{w(U_1V_5)=x^{(1,3)}_{2}}$};
	\node (B1) at (12,-3)   {$\bf{w(U_3V_5)=x^{(2,3)}_{2}}$};
	\node (B1) at (12,-3.5)   {$\bf{w(U_1V_3)=x^{(1,2)}_{2}}$};
	\node (B1) at (12,-4)   {$\bf{w(U_5V_1)=x^{(3,1)}_{2}}$};
	%
	\draw[](3,-4.5)--(6.05,-.05);
	\draw[](3,0)--(5.9, -4.45);
	\draw[](8,-2)--(1.05,-2.05);
	%
	%
	\end{tikzpicture}
	\caption{A weighted acyclic digraph $\Gamma_1$. The weight of the corresponding edges appear on figure.}
	\label{fig:hookf3}	
\end{figure}
\begin{figure}[ht!]
	\tiny
	\tikzstyle{ver}=[]
	\tikzstyle{vert}=[circle, draw, fill=black!100, inner sep=0pt, minimum width=4pt]
	\tikzstyle{vertex}=[circle, draw, fill=black!00, inner sep=0pt, minimum width=4pt]
	\tikzstyle{edge} = [draw,thick,-]
	\tikzstyle{node_style} = [circle,draw=blue,fill=blue!20!,font=\sffamily\Large\bfseries]
	\centering
	\tikzset{->,>=stealth', auto,node distance=1cm,
		thick,main node/.style={circle,draw,font=\sffamily\Large\bfseries}}
	\tikzset{->-/.style={decoration={
				markings,
				mark=at position #1 with {\arrow{>}}},postaction={decorate}}}
	\begin{tikzpicture}[scale=1]
	\tikzstyle{edge_style} = [draw=black, line width=2mm, ]
	\tikzstyle{node_style} = [draw=blue,fill=blue!00!,font=\sffamily\Large\bfseries]
	\draw[](3,3)--(5.85,3);
	\draw[](8,1)--(5.95,3);
	\draw[](8,1)--(6,-1.2);
	\draw[](3,-1.2)--(5.94,-1.2);
	\draw[](3,-1.2)--(1.05,1);
	\draw[](3,3)--(.95,1);
	\node (B1) at (2.6,-1.2)   {$\bf{U_2}$};
	\node (B1) at (6.4,-1.3)   {$\bf{V_2}$};
	\node (B1) at (8.4,1)   {$\bf{U_4}$};
	\node (B1) at (5.85,3.3)   {$\bf{V_4}$};
	\node (B1) at (2.6,3)   {$\bf{U_6}$};
	\node (B1) at (.6,.95)   {$\bf{V_6}$};
	\node (B1) at (12,3)   {$\bf{w(U_2V_2)=x^{(1,1)}_{1}-x^{(1,1)}_{2}}$};
	\node (B1) at (12,2.5)   {$\bf{w(U_4V_2)=x^{(2,1)}_{1}-x^{(2,1)}_{2}}$};
	\node (B1) at (12,2)   {$\bf{w(U_4V_4)=x^{(2,2)}_{1}-x^{(2,2)}_{2}}$};
	\node (B1) at (12,1.5)   {$\bf{w(U_6V_4)=x^{(3,2)}_{1}-x^{(3,2)}_{2}}$};
	\node (B1) at (12,1)   {$\bf{w(U_6V_6)=x^{(3,3)}_{1}-x^{(3,3)}_{2}}$};
	\node (B1) at (12,.5)   {$\bf{w(U_2V_6)=x^{(1,3)}_{1}-x^{(1,3)}_{2}}$};
	\node (B1) at (12,0)   {$\bf{w(U_2V_4)=x^{(1,2)}_{1}-x^{(1,2)}_{2}}$};
	\node (B1) at (12,-.5)   {$\bf{w(U_6V_2)=x^{(3,1)}_{1}-x^{(3,1)}_{2}}$};
	\node (B1) at (12,-1)   {$\bf{w(U_4V_6)=x^{(2,3)}_{1}-x^{(2,3)}_{2}}$};
	%
	\fill[black!100!](3,3) circle (.05);
	\fill[black!100!](5.91,3) circle (.05);
	\fill[black!100!](8,1) circle (.05);
	\fill[black!100!](5.96, -1.2) circle (.05);
	\fill[black!100!](3, -1.2) circle (.05);
	\fill[black!100!](.98,.95) circle (.05);
	\draw[](3,3)--(6, -1.15);
	\draw[](3,-1.2)--(5.82,2.95);
	\draw[](8,1)--(1.08,1.02);
	%
	\end{tikzpicture}
	\caption{A weighted acyclic digraph $\Gamma_2$. The weight of the corresponding edges appear on figure.}
	\label{fig:hookf4}	
\end{figure} 
Now we have to characterize all the vertex disjoint path systems in the graph $\Gamma_{N, m}.$ The following lemmas give the complete characterization of all vertex disjoint path systems in $\Gamma_{N, m}.$
\begin{lemma}\label{combi lemma1}
Let $\mathcal{P}_{\Gamma_{N, m}}\in VD_{\Gamma_{N, m}}$ and $P$ be a path in $\mathcal{P}_{\Gamma_{N, m}}.$  Then $\ell(P)=1.$
\end{lemma}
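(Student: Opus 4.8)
The plan is to analyze the structure of the graph $\Gamma_{N,m}$ carefully. Recall the construction: the vertex set is partitioned into $m$ levels, where level $i$ consists of $\tilde U_i \cup \tilde V_i$ and spans a complete bipartite digraph $\Gamma_i \cong K_{N,N}$ with all edges directed from $\tilde U_i$ to $\tilde V_i$. The only additional edges are the ``vertical'' edges $U_{j+1}U_j$ (directed from $U_{j+1}$ down to $U_j$, weight $1$) within the $U$-columns and $V_jV_{j+1}$ (directed from $V_j$ up to $V_{j+1}$, weight $1$) within the $V$-columns, as in Figure \ref{fig:hookf1}. So every edge of $\Gamma_{N,m}$ is of exactly one of three types: a ``bipartite'' edge $U_k \to V_\ell$ with $k,\ell$ in the same residue class mod $m$, a ``descending'' edge $U_{k} \to U_{k-1}$, or an ``ascending'' edge $V_\ell \to V_{\ell+1}$.

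First I would observe that within a single vertex-disjoint path system $\mathcal{P}_{\Gamma_{N,m}}$, each of the $Nm$ paths starts at some $U_a$ and ends at some $V_b$ (since $U$-vertices are the only sources and $V$-vertices the only sinks, because $U$-column edges only descend among $U$'s and $V$-column edges only ascend among $V$'s, and the bipartite edges go $U\to V$). Moreover a path, once it enters the $V$-side via a bipartite edge, can never return to the $U$-side; so every path has the shape: a (possibly empty) run of descending $U$-edges, then exactly one bipartite edge $U_k \to V_\ell$, then a (possibly empty) run of ascending $V$-edges. The key point to exploit is vertex-disjointness combined with the fact that there are exactly $Nm$ paths and exactly $Nm$ vertices $U_1,\dots,U_{Nm}$ and $Nm$ vertices $V_1,\dots,V_{Nm}$: since every $U_a$ must be used (it is a source, and there are $Nm$ disjoint paths covering... ) — more precisely, each of the $Nm$ paths has a distinct starting $U$-vertex and a distinct ending $V$-vertex, so the starts are all of $U_1,\dots,U_{Nm}$ and the ends are all of $V_1,\dots,V_{Nm}$ (a permutation). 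Now if some path used a descending edge $U_{k}\to U_{k-1}$, then $U_{k-1}$ would be an internal vertex of that path, hence $U_{k-1}$ could not be the starting vertex of any path, contradicting that all of $U_1,\dots,U_{Nm}$ are starting vertices. Symmetrically, if some path used an ascending edge $V_\ell \to V_{\ell+1}$, then $V_{\ell+1}$ is internal to that path and cannot be the terminal vertex of any path, contradicting that all $V$'s are terminal vertices.

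Therefore every path consists of a single bipartite edge, i.e. $\ell(P)=1$. I would write this up as: fix $\mathcal{P}_{\Gamma_{N,m}} \in VD_{\Gamma_{N,m}}$; argue each path has a unique source in $U$ and sink in $V$ and the collection of sources (resp. sinks) is all of $\{U_j\}$ (resp. $\{V_j\}$) by a counting/pigeonhole argument; then rule out descending and ascending edges as above; conclude the path is a single $U\to V$ bipartite edge. The main obstacle — really the only subtlety — is justifying cleanly that \emph{every} $U_j$ occurs as a source and \emph{every} $V_j$ as a sink: this needs the observation that the $Nm$ paths are vertex-disjoint and there are only $Nm$ available source-candidates and $Nm$ sink-candidates, so no vertex can be "wasted" as an internal vertex on the $U$-side start-region or $V$-side end-region. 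One should also note $\Gamma_{N,m}$ is acyclic (as remarked in the construction) so paths are well-defined finite sequences and the GLV setup applies; this is needed implicitly but is already established.
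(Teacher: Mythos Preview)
Your argument is correct and is essentially the same as the paper's: the core contradiction is that any non-initial $U$-vertex (respectively non-terminal $V$-vertex) appearing on a path would collide with the path that must start (respectively end) there, violating vertex-disjointness. The paper phrases this by looking at the second vertex of a putative length-$\geq 2$ path, while you phrase it via edge types, but the idea is identical.

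One small remark: the point you flag as ``the only subtlety'' --- that every $U_j$ is a source and every $V_j$ a sink --- is not really a pigeonhole argument at all; it is immediate from the definition of a path system $(\mathcal{P},\sigma)$, where the paths are literally indexed as $P_i : U_i \to V_{\sigma(i)}$. So you can drop the counting justification and simply cite the definition. Likewise, the preliminary structural analysis of paths (descending run, one bipartite edge, ascending run) is correct but unnecessary for the proof; the contradiction fires as soon as any internal $U$- or $V$-vertex appears.
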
 
\begin{proof}
Let $P$ be a path in  $\mathcal{P}_{\Gamma_{N, m}}$ from the initial vertex $U_{l}$ to the terminal vertex $V_{k},$ for some $U_{l}\in U$ and $V_{k}\in V.$  Then we show that the length of the path $P$ is $1.$ In fact, length of $P>1$ implies, $P$ contains at least  three distinct vertices. Then $P$ is of the form either $U_lU_i\cdots V_k$ or $U_lV_j\cdots V_k,$ for some $U_i\in U$ and $V_j\in V,$ i.e, the second vertex of $P$ (a vertex next to the initial vertex in $P$) is either $U_i$ or $V_j.$  Suppose $P$ is $U_lU_i\cdots V_k.$ Since $\mathcal{P}_{\Gamma_{N, m}}$ is a path system from $U$ to $V,$ we must have another path $\acute{P}$ in $\mathcal{P}_{\Gamma_{N, m}},$ whose initial vertex is $U_i.$ This contradicts that the path system $\mathcal{P}_{\Gamma_{N, m}}$ is vertex disjoint. Again if $P$ is $U_lV_j\cdots V_k,$ then in $\mathcal{P}_{\Gamma_{N, m}}$ we get another path $\tilde{P}$ whose terminal vertex is $V_j,$ which also contradicts the fact $\mathcal{P}_{\Gamma_{N, m}}$ is vertex disjoint. Therefore, the length of each path in any vertex disjoint path system must be $1.$
\end{proof}
In this portion we will think about the vertex disjoint path systems in  subgraphs $\Gamma_1, \Gamma_2, \cdots, \Gamma_m$ of the graph $\Gamma_{N, m}.$ For each $\Gamma_i  (i=1, \cdots, m)$ we choose $\tilde{U}_i, \tilde{V}_i$ as the initial and terminal set of vertices respectively. It can be shown that the path matrix of the graph $\Gamma_i$ is the matrix $X_{(m-i+1, m-i+2)},$ defined as \eqref{hookmat:1}. From our construction of the graph $\Gamma_{N. m},$ it is evident that \[VD_{\Gamma_{N, m}}=\{(\mathcal{P}, \sigma)= (\mathcal{P}^1\cup \mathcal{P}^2\cup\cdots\cup \mathcal{P}^m, \sigma_1\sigma_2\cdots\sigma_m), \text{ where } (\mathcal{P}^i, \sigma_i) \text{ is a path system of } \Gamma_i\}.\]    
Therefore, we can write 
\begin{align*}\label{hookdetform2}
\sum\limits_{(\mathcal{P}, \sigma)\in VD_{\Gamma_{N, m}}}&\text{sgn}(\mathcal{P}, \sigma)w(\mathcal{P}, \sigma)=\\ &\left(\sum\limits_{(\mathcal{P}^1, \sigma_1)\in VD_{\Gamma_1}}\text{sgn}(\mathcal{P}^1, \sigma_1)w(\mathcal{P}^1, \sigma_1) \right)\cdots \left(\sum\limits_{(\mathcal{P}^m, \sigma_m)\in VD_{\Gamma_m}}\text{sgn}(\mathcal{P}^m, \sigma_m)w(\mathcal{P}^m, \sigma_m)\right).
\end{align*}
Now, $\left(\sum\limits_{(\mathcal{P}^i, \sigma_i)\in VD_{\Gamma_i}}\text{sgn}(\mathcal{P}^i, \sigma_i)w(\mathcal{P}^i, \sigma_i)\right)=\det(X_{(m-i+1, m-i+2)}),$  where $i\in[m]$ and $X_{(m-i+1, m-i+2)}$ is the path matrix associated to the graph $\Gamma_i.$ Hence the theorem.
\end{proof}
\begin{example}
If we take $N=3, m=2,$ then the weighted acyclic digraph $\Gamma_{3, 2}$ is Figure \ref{fig:hookf2} and $\Gamma_1, \Gamma_2$ are Figures \ref{fig:hookf3}, \ref{fig:hookf4} respectively.  Moreover the path matrix associated to $\Gamma_{3, 2}$ is $A(3,2)$ defined as \eqref{A_{3,2}hookmat:5}. Consider a vertex disjoint path system  $U_1V_1, U_2V_2, \cdots, U_6V_6$ in $\Gamma,$  where each path $U_iV_i (i=1, \cdots, 6)$ is an edge.  Notice that, three paths $U_1V_1, U_3V_3, U_5V_5$ from the above path system is a vertex disjoint path system in $\Gamma_1$ and three  another paths  $U_2V_2, U_4V_4, U_6V_6$ is a vertex disjoint path system in $\Gamma_2.$  Again $U_1V_5, U_2V_2, U_3V_1, U_4V_6, U_5V_3,  U_6V_4$ is vertex disjoint path system in $\Gamma$. For this vertex disjoint path system, $U_1V_5, U_5V_3, U_3V_1$ is a vertex disjoint path system in $\Gamma_1$ and $U_2V_2, U_4V_6, U_6V_4$ is a vertex disjoint path system in $\Gamma_2.$ 	
\end{example}
\begin{rem}
In exactly the same way we can give combinatorial explanation of all other theorems. In those cases also we use $\Gamma_{N, m}$ as combinatorial object possibly permuting some of its vertices.
\end{rem}
\subsection*{Acknowledgment} I would like to thank my mentor Prof. Arvind Ayyer for giving this problem, insightful discussion to solve the problem and valuable suggestions in the preparation of this paper. The author was supported by Department of Science and Technology grant EMR/2016/006624 and partly supported by  UGC Centre for Advanced Studies. Also the author was supported by NBHM Post Doctoral Fellowship grant 0204/52/2019/RD-II/339.    
\bibliographystyle{amsplain}
\bibliography{gen-inv-lcp}
\end{document}